\DeclareMathOperator*{\rank}{rank}
\DeclareMathOperator*{\diag}{diag}
\newtheorem{theorem}{Theorem}
\newtheorem{lemma}[theorem]{Lemma}
\newtheorem{remark}{Remark}
\newcommand{\R}{\mathbb R}
\renewcommand{\algorithmicrequire}{\textbf{Input: }}
\renewcommand{\algorithmicensure}{\textbf{Output: }}
\author{Daniel Kressner\thanks{Institute of Mathematics, EPF Lausanne, 1015
Lausanne, Switzerland. E-mails: \texttt{daniel.kressner@epfl.ch}, \texttt{\# hysan.lam@epfl.ch}. The work of both authors was supported by the SNSF research project \textit{Fast algorithms from low-rank updates}, grant number: 200020\_178806.}\and Hei Yin Lam\footnotemark[1]}
\date{\today}
\title{Randomized low-rank approximation \\ of parameter-dependent matrices}
\begin{document}
\maketitle
\begin{abstract}
This work considers the low-rank approximation of a matrix $A(t)$ depending on a parameter $t$ in a compact set $D \subset \R^d$. Application areas that give rise to such problems include computational statistics and dynamical systems. Randomized algorithms are an increasingly popular approach for performing low-rank approximation and they usually proceed by multiplying the matrix with random dimension reduction matrices (DRMs). Applying such algorithms directly to $A(t)$ would involve different, independent DRMs for every $t$, which is not only expensive but also leads to inherently non-smooth approximations. In this work, we propose to use constant DRMs, that is, $A(t)$ is multiplied with the same DRM for every $t$. The resulting parameter-dependent extensions of two popular randomized algorithms, the randomized singular value decomposition and the generalized Nystr\"{o}m method, are computationally attractive, especially when $A(t)$ admits an affine linear decomposition with respect to $t$. {We perform a probabilistic analysis for both algorithms, deriving bounds on the expected value as well as failure probabilities for the approximation error when using Gaussian random DRMs}. Both, the theoretical results and numerical experiments, show that the use of constant DRMs does not impair their effectiveness; our methods reliably return quasi-best low-rank approximations.
\end{abstract}

\section{Introduction}

 This work is concerned with performing the low-rank approximation of a parameter-dependent matrix $A(t)\in \mathbb{R}^{m\times n}$, $m\geq n$ for many values of the parameter $t\in D \subset \R^d$. This task arises in a variety of application areas, including Gaussian process regression~\cite{kressner2020certified} and time-dependent data from dynamical systems, image processing tasks or natural language processing~\cite{nonnenmacher2008dynamical}; see Section~\ref{sect: numerical} for more concrete examples.
 
When $D$ is an interval, say $D = [0,T]$, a low-rank approximation of $A(t)$ can be obtained by considering $t$ as time and solving suitable differential equations. For example, under certain smoothness assumptions on $A(t)$, a smooth variant of the singular value decomposition (SVD) of $A(t)$ satisfies a system of differential equations~\cite{baumann2003singular,bunse1991numerical,dieci1999smooth}. For the purpose of low-rank approximation, a cheaper and simpler approach follows from the fact that the set of fixed-rank matrices forms a smooth manifold.
 In turn, differential equations for the low-rank factors can be obtained from projecting $\dot{A}(t)$ on the tangent space of this manifold, leading to the so-called dynamical low-rank approximation~\cite{koch2007dynamical,nonnenmacher2008dynamical}.  Due to the stiffness introduced by the curvature of the manifold, the numerical implementation of this approach comes with a number of challenges that have been addressed during the last years \cite{ceruti2022unconventional, Lubich2014projector}.  Reduced basis methods \cite{Hesthaven2016Certified,Quarteroni2016Reduced} provide a general framework for solving large-scale parameter-dependent problems. In~\cite{kressner2020certified}, this framework has been adapted to yield a parametric variant of adaptive cross approximation for symmetric positive definite $A(t)$, which
 constructs a low-rank approximation by sampling entries of $A(t)$.
 
When $A(t) \equiv B$ for a constant matrix $B\in \mathbb{R}^{m\times n}$, $m\geq n$, it is well known that the truncated SVD achieves the best approximation of fixed rank $r$~\cite{horn2012matrix}. However, the computational cost~\cite[section 8.6.3]{golub2013matrix} of the SVD renders it infeasible for large matrices. The randomized SVD, referred as the Halko-Martinson-Tropp (HMT) method \cite{halko2011finding} in the following, is an increasingly popular alternative that attains near-optimal approximation quality at significantly lower cost. HMT first creates a sketch of the matrix $B$ by multiplying it with a random dimension reduction matrix (DRM) $\Omega\in \mathbb{R}^{n\times (r+p)}$, with some small integer $p$, and then obtains a low-rank approximation by orthogonally projecting $B$ onto the range of $B\Omega$. While the HMT method is easy to implement and effective, it requires two passes over $B$ and it is thus not streamable in the sense of~\cite{clarkson2009numerical}. Alternatively, the generalized Nystr\"{o}m method \cite{clarkson2009numerical,tropp2017practical} utilizes two sketches: $B\Omega$ and $\Psi^TB$ with random DRMs $\Omega\in \mathbb{R}^{n\times (r+p)}$, $\Psi\in \mathbb{R}^{m\times (r+p+\ell)}$ for some additional small integer $\ell>0$. A low-rank approximation is obtained by oblique projection,
which makes the generalized Nystr\"{o}m method less accurate compared to the HMT method but it still achieves near-optimal approximation quality~\cite{nakatsukasa2020fast, tropp2017practical}. Notably, the generalized Nystr\"{o}m method is streamable and requires only one pass
over the matrix $B$.

In this work, we aim at extending randomized low-rank approximation to a parameter-dependent matrix $A(t)\in \mathbb{R}^{m\times n}$. The most straightforward extension certainly consists of applying HMT or generalized Nystr\"{o}m from scratch for every $t$. In the case of HMT, this corresponds to using a DRM $\Omega(t)$ that is different and, in fact, independent for every $t$. {Clearly, this does not allow one to exploit structure, such as smoothness properties of $A(t)$. For example, when the range of $A(t)$ changes very little when moving from one parameter value to the next, it appears to be wasteful to recompute the full sketch.} Moreover, the resulting approximation does not inherit any smoothness from $A$ due to the non-smooth of the DRM.
{We avoid these drawbacks by using \emph{constant} DRMs, that is DRMs which do not depend on $t$}. {This not only avoids most of the cost for generating DRMs but it can also reduce significantly the cost of constructing the sketches and, in turn, the low-rank approximation itself. For example, consider an affine linear dependence commonly found in the literature \cite{kressner2020certified,rozza2008reduced,sirkovic2016subspace}: $A(t)=\sum^k_{i=1}\varphi_i(t)A_i$ for scalar functions $\varphi_i:D \rightarrow \mathbb{R}$ and constant matrices $A_i\in\mathbb{R}^{m\times n}$. The sketches $A_i\Omega$ are computed a priori, in an offline phase. In the subsequent online phase, the evaluation of a sketch $A(t)\Omega=\sum^k_{i=1}\varphi_i(t)A_i\Omega$ only requires evaluating scalar functions and adding matrices together.} {Perhaps surprisingly, our analysis and numerical experiments reveal that using constant instead of independent Gaussian random DRMs for HMT and generalized Nystr\"{o}m only has a minor impact on the $L^2(D)$  approximation error, provided that $A(t)$ is continuous with respect to $t$. While it may be more natural to consider the worst-case error with respect to $t$ in applications, the analysis is significantly more complicated. As a first step in this direction, we provide such an analysis for the HMT method when $A(t)$ is Lipschitz continuous and $D=[0,T]$.} 

The remainder of the paper is organized as follows. In Section 2, after recalling existing results for HMT, we present our HMT method for parameter-dependent matrices and provide bounds on the expected error as well as the failure probability that the error is significantly larger than the best $L^2$ approximation error. {To further quantify the error, we also provide a first failure probability bound in the uniform norm. 
} In Section 3, after recalling the generalized Nystr\"{o}m method, we extend it to parameter-dependent matrices and provide a bound on the expected error that generalizes results from~\cite{nakatsukasa2020fast, tropp2017practical}. We also provide a bound on the failure probability, which even in the constant case is new. In Section 4, we report numerical experiments to illustrate our theoretical findings.

\section{The HMT method for parameter-dependent matrices}
 Before we describe and analyze its extension to parameter-dependent
matrices, we briefly review the theory of the HMT method from~\cite{halko2011finding} for a constant matrix $B\in \mathbb{R}^{m\times n}$.

\subsection{HMT for a constant matrix}

Given a random DRM $\Omega \in \mathbb{R}^{n\times (r+p)}$, the HMT method constructs a low-rank approximation by orthogonally projecting $B\in \mathbb{R}^{m\times n}$ onto the column space of $B\Omega$, i.e.,
\begin{equation*}
    B\approx \mathcal{P}_{B\Omega} B,
\end{equation*}
where ${P}_{B\Omega}:=(B\Omega)(B\Omega)^\dagger$ and $\dagger$ denotes the Moore-Penrose pseudoinverse.
To compare the approximation error of HMT with the best rank-$r$ approximation error of $B$, let us consider the SVD of $B=U\Sigma V^T$, partitioned as follows:
\begin{equation}
\label{partition}
    B=U\begin{bmatrix}
    \Sigma_1& 0 \\
    0 &\Sigma_2  \\
   
    \end{bmatrix}V^T=U
    \begin{bmatrix}
    \Sigma_1& 0 \\
    0 &\Sigma_2 \\

    \end{bmatrix}\begin{bmatrix}
    V_1^T \\
    V_2^T \\
 
    \end{bmatrix},
\end{equation} where $\Sigma_1\in \mathbb{R}^{r\times r}$, $\Sigma_2\in \mathbb{R}^{(m-r)\times (n-r)}$, $V_1 \in \mathbb{R}^{n\times r}$, and $V_2 \in \mathbb{R}^{n\times (n-r)}$. The probabilistic error analysis of HMT relies on the following structural bound.
\begin{theorem}[{\cite[Theorem 9.1]{halko2011finding}}]
\label{deter bound}
With the notation introduced above, assume that $V_1^T\Omega$ has full row rank. Then
$
    \|(I-\mathcal{P}_{B\Omega})B\|_F^2\leq \|\Sigma_2\|_F^2+\|\Sigma_2(V_2^T\Omega)(V_1^T\Omega)^\dagger\|_F^2.
$
\end{theorem}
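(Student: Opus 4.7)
The plan is to exploit the variational characterization of the orthogonal projection: since $\mathcal{P}_{B\Omega}$ is the orthogonal projector onto $\operatorname{range}(B\Omega)$, any matrix $X$ whose columns lie in $\operatorname{range}(B\Omega)$ satisfies $(I-\mathcal{P}_{B\Omega})X = 0$, so
\[
    \|(I-\mathcal{P}_{B\Omega}) B\|_F = \|(I-\mathcal{P}_{B\Omega})(B-X)\|_F \le \|B-X\|_F,
\]
the last inequality following from $\|I-\mathcal{P}_{B\Omega}\| \le 1$. Thus it suffices to produce a clever matrix $X$ of the form $B\Omega M$ for some $M$, chosen so that $\|B-X\|_F^2$ equals the bound in the statement.

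The natural candidate, motivated by the partition in~\eqref{partition}, is
\[
    X := B\Omega (V_1^T\Omega)^\dagger V_1^T.
\]
To compute $B-X$, I substitute the SVD $B=U\Sigma V^T$, set $\Omega_1 := V_1^T\Omega$ and $\Omega_2 := V_2^T\Omega$, and use $V^T\Omega = \begin{bmatrix}\Omega_1\\\Omega_2\end{bmatrix}$. This gives
\[
    B-X = U\Sigma\bigl(V^T - V^T\Omega\, \Omega_1^\dagger V_1^T\bigr)
        = U\Sigma\begin{bmatrix} V_1^T - \Omega_1\Omega_1^\dagger V_1^T \\ V_2^T - \Omega_2\Omega_1^\dagger V_1^T\end{bmatrix}.
\]
Here the full-row-rank assumption on $\Omega_1$ enters decisively: it forces $\Omega_1\Omega_1^\dagger = I_r$, so the top block vanishes. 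After exploiting unitary invariance of the Frobenius norm and the block structure of $\Sigma$, I am left with
\[
    \|B-X\|_F^2 = \bigl\|\Sigma_2\bigl(V_2^T - \Omega_2\Omega_1^\dagger V_1^T\bigr)\bigr\|_F^2.
\]

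The final step is a Pythagorean decomposition of this quantity. The cross-term in $\|\Sigma_2 V_2^T - \Sigma_2\Omega_2\Omega_1^\dagger V_1^T\|_F^2$ is proportional to $\operatorname{tr}\!\bigl(V_1^T V_2\,\Sigma_2^T\Sigma_2\Omega_2\Omega_1^\dagger\bigr)$, which vanishes by orthonormality of $V$, i.e.\ $V_1^T V_2 = 0$. Combined with $V_2^T V_2 = I_{n-r}$ and $V_1^T V_1 = I_r$, this yields
\[
    \|B-X\|_F^2 = \|\Sigma_2\|_F^2 + \|\Sigma_2\Omega_2\Omega_1^\dagger\|_F^2 = \|\Sigma_2\|_F^2 + \|\Sigma_2(V_2^T\Omega)(V_1^T\Omega)^\dagger\|_F^2,
\]
which, combined with the first display, is precisely the claimed bound.

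Overall, the argument is largely bookkeeping once $X$ is chosen; the only real obstacle is guessing the correct $X$. The choice $X = B\Omega\,\Omega_1^\dagger V_1^T$ is natural in hindsight because $\Omega\,\Omega_1^\dagger V_1^T$ is the oblique approximation of $I$ that is exact on $\operatorname{range}(V_1)$, mirroring how HMT is expected to capture exactly the dominant singular subspace when $\Omega_1$ is well-conditioned.
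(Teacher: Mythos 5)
Your proof is correct. Note that the paper itself does not prove this statement --- it is quoted verbatim from Halko--Martinsson--Tropp \cite[Theorem 9.1]{halko2011finding} --- so the comparison is really with that source. Your argument is a streamlined variant of theirs: HMT first reduce to the projector onto the range of the ``flattened'' matrix $B\Omega\,\Omega_1^\dagger$ (via their Proposition 8.6, which says that shrinking the range can only increase the residual of an orthogonal projection) and then compute that projector's action explicitly, whereas you bypass any explicit projector computation by the variational inequality $\|(I-\mathcal{P}_{B\Omega})B\|_F \le \|B - X\|_F$ for the specific feasible point $X = B\Omega(V_1^T\Omega)^\dagger V_1^T$. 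All the individual steps check out: $\Omega_1\Omega_1^\dagger = I_r$ follows from the full-row-rank assumption, the top block of $\Sigma(V^T - V^T\Omega\,\Omega_1^\dagger V_1^T)$ indeed vanishes, and the cross term in the final expansion is $\mathrm{tr}(V_1^TV_2\,\Sigma_2^T\Sigma_2\Omega_2\Omega_1^\dagger)=0$ by $V_1^TV_2=0$. One cosmetic remark: your proof actually establishes the bound with equality for $\|B-X\|_F^2$, and the only inequality is the projection step, which makes transparent where slack can occur; this is the same structure as in HMT, just packaged more economically.
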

For specific random DRMs, the result of Theorem~\ref{deter bound} can be turned into bounds on the expected error and failure probability. In particular, this is the case when $\Omega$ is a Gaussian random matrix, i.e., the entries are i.i.d. standard normal random variables.
\begin{theorem}[{\cite[Theorem 10.5]{halko2011finding}}]
\label{HMT}
Let $\sigma_1(B)\geq \sigma_2(B)\geq \cdots$ denote the singular values of $B\in \R^{m\times n}$.
Suppose that $\Omega$ is an $n\times (r+p)$ Gaussian random matrix, where $r\geq 2$, $p \geq 2$, and $r+ p \leq \min\{m, n\}$. 
Then 
\begin{equation*}
    \mathbb{E}\|(I-\mathcal{P}_{B\Omega})B\|^2_F\leq \Big(1+\frac{r}{p-1}\Big)\Big(\sum_{j>r}\sigma_j(B)^2\Big).
\end{equation*}
\end{theorem}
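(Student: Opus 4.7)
The plan is to take expectations in the deterministic structural bound of Theorem~\ref{deter bound}. The two key ingredients are rotational invariance of the Gaussian distribution and an explicit formula for the expected squared Frobenius norm of the pseudoinverse of a Gaussian matrix.

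First, since $V=[V_1\ V_2]$ is orthogonal and $\Omega$ has i.i.d.\ standard normal entries, $V^T\Omega$ is again a standard Gaussian matrix. In particular, the blocks $\Omega_1:=V_1^T\Omega\in\mathbb{R}^{r\times(r+p)}$ and $\Omega_2:=V_2^T\Omega\in\mathbb{R}^{(n-r)\times(r+p)}$ are independent standard Gaussian matrices, and $\Omega_1$ has full row rank almost surely. Hence the hypothesis of Theorem~\ref{deter bound} holds with probability one, and taking expectations of both sides produces
\begin{equation*}
\mathbb{E}\|(I-\mathcal{P}_{B\Omega})B\|_F^2\;\leq\;\|\Sigma_2\|_F^2+\mathbb{E}\|\Sigma_2\Omega_2\Omega_1^\dagger\|_F^2.
\end{equation*}

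Next, I would evaluate the remaining expectation by conditioning on $\Omega_1$. For any fixed matrices $X,Y$ and a standard Gaussian matrix $G$ of compatible size, a direct entrywise moment computation gives the identity $\mathbb{E}\|XGY\|_F^2=\|X\|_F^2\|Y\|_F^2$. Applying this with $G=\Omega_2$, $X=\Sigma_2$, $Y=\Omega_1^\dagger$, exploiting the independence of $\Omega_1$ and $\Omega_2$, and then removing the conditioning yields
\begin{equation*}
\mathbb{E}\|\Sigma_2\Omega_2\Omega_1^\dagger\|_F^2=\|\Sigma_2\|_F^2\,\mathbb{E}\|\Omega_1^\dagger\|_F^2.
\end{equation*}

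The main technical step is to compute $\mathbb{E}\|\Omega_1^\dagger\|_F^2$ for the $r\times(r+p)$ Gaussian matrix $\Omega_1$ with $p\geq 2$. Since $\Omega_1\Omega_1^T$ has a Wishart distribution on $r\times r$ matrices with $r+p$ degrees of freedom, its inverse follows an inverse Wishart distribution, whose trace has the known expectation
\begin{equation*}
\mathbb{E}\|\Omega_1^\dagger\|_F^2=\mathbb{E}\operatorname{tr}\bigl((\Omega_1\Omega_1^T)^{-1}\bigr)=\frac{r}{p-1},
\end{equation*}
where the assumption $p\geq 2$ is what guarantees integrability. Substituting back and using $\|\Sigma_2\|_F^2=\sum_{j>r}\sigma_j(B)^2$ yields the claimed inequality. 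I expect the inverse-Wishart moment to be the only step requiring a genuine distributional argument; everything else reduces to independence, rotational invariance and the one-line Gaussian moment identity.
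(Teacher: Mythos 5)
Your proposal is correct and is essentially the standard Halko--Martinsson--Tropp argument that the paper invokes by citation rather than reproving: rotational invariance to get independent Gaussian blocks $\Omega_1,\Omega_2$, the expectation of the structural bound of Theorem~\ref{deter bound}, the identity $\mathbb{E}\|XGY\|_F^2=\|X\|_F^2\|Y\|_F^2$ (Lemma~\ref{L2norm} with $q=1$), and the inverse-Wishart moment $\mathbb{E}\|\Omega_1^\dagger\|_F^2=r/(p-1)$. The same conditioning structure reappears in the paper's Lemma~\ref{pmoment}, which generalizes exactly this computation to higher-order moments.
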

\begin{theorem}[{\cite[Theorem 10.7]{halko2011finding}}]
\label{HMT tail} Suppose that the assumptions of Theorem~\ref{HMT} and, additionally, $p\ge 4$ hold.
Then for all $u,\gamma\geq 1$, the probability that the inequality
\begin{equation} \label{eq:hmttail}
\|(I-\mathcal{P}_{B\Omega})B\|_F\leq \Big(1+\gamma\cdot \sqrt{\frac{3r}{p+1}}\Big)\Big(\sum_{j>r}\sigma_j(B)^2\Big)^{\frac{1}{2}}+u\gamma\cdot\frac{e\sqrt{r+p}}{p+1}\cdot \sigma_{r+1}(B),
\end{equation} fails is at most $2\gamma^{-p}+e^{-u^2/2}$.\end{theorem}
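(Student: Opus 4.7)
The plan is to combine the structural bound of Theorem~\ref{deter bound} with standard Gaussian concentration, following the strategy of~\cite[Section~10.3]{halko2011finding}. Since $\sqrt{\|\Sigma_2\|_F^2+Z^2}\le \|\Sigma_2\|_F+Z$ for the random variable $Z:=\|\Sigma_2(V_2^T\Omega)(V_1^T\Omega)^\dagger\|_F$, it suffices to control $Z$. Because $V=[V_1\ V_2]$ is orthogonal, $V^T\Omega$ is again Gaussian with i.i.d.\ standard normal entries, so the blocks $\Omega_1:=V_1^T\Omega\in\R^{r\times(r+p)}$ and $\Omega_2:=V_2^T\Omega\in\R^{(n-r)\times(r+p)}$ are independent.

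First I would condition on $\Omega_1$ and view $Z=\|\Sigma_2\Omega_2\Omega_1^\dagger\|_F$ as a function of $\Omega_2$ alone. The map $G\mapsto \|\Sigma_2 G\Omega_1^\dagger\|_F$ is Lipschitz in the Frobenius norm with constant at most $\|\Sigma_2\|_2\|\Omega_1^\dagger\|_2=\sigma_{r+1}(B)\|\Omega_1^\dagger\|_2$, and a direct variance computation yields the identity $\mathbb{E}\|\Sigma_2\Omega_2 X\|_F^2=\|\Sigma_2\|_F^2\|X\|_F^2$ for any deterministic matrix $X$, so Jensen's inequality gives
\begin{equation*}
\mathbb{E}[Z\mid\Omega_1]\le\|\Sigma_2\|_F\|\Omega_1^\dagger\|_F.
\end{equation*}
The Gaussian concentration inequality for Lipschitz functions then produces, for every $u\ge 0$,
\begin{equation*}
\mathbb{P}\bigl\{Z>\|\Sigma_2\|_F\|\Omega_1^\dagger\|_F+u\,\sigma_{r+1}(B)\|\Omega_1^\dagger\|_2\,\big|\,\Omega_1\bigr\}\le e^{-u^2/2}.
\end{equation*}

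Next I would control the two norms of $\Omega_1^\dagger$ via the tail estimates in~\cite[Proposition~10.4]{halko2011finding}: for $p\ge 4$ and any $\gamma\ge 1$,
\begin{equation*}
\mathbb{P}\bigl\{\|\Omega_1^\dagger\|_F>\gamma\sqrt{3r/(p+1)}\bigr\}\le\gamma^{-p},\qquad \mathbb{P}\bigl\{\|\Omega_1^\dagger\|_2>\gamma\,e\sqrt{r+p}/(p+1)\bigr\}\le\gamma^{-p}.
\end{equation*}
On the intersection of the two good events, plugging these into the conditional tail bound for $Z$ and then into $\|(I-\mathcal{P}_{B\Omega})B\|_F\le\|\Sigma_2\|_F+Z$ reproduces exactly the right-hand side of~\eqref{eq:hmttail}, after identifying $\|\Sigma_2\|_F=(\sum_{j>r}\sigma_j(B)^2)^{1/2}$. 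A union bound over the two tail events for $\Omega_1$ together with the Gaussian concentration event yields total failure probability at most $2\gamma^{-p}+e^{-u^2/2}$.

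The main subtle point is that the Lipschitz constant of $\Omega_2\mapsto Z$ depends on $\Omega_1$ itself, so the concentration step must be applied conditionally and then integrated against the marginal law of $\Omega_1$; this conditioning is what forces the two-level failure budget and explains the factor of $2$ in the $\gamma^{-p}$ term. Beyond this conceptual point, the argument is essentially bookkeeping, and the specific constants $\sqrt{3r/(p+1)}$ and $e\sqrt{r+p}/(p+1)$ enter only through the precise form of Proposition~10.4 of~\cite{halko2011finding}.
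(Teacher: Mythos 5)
Your argument is correct and follows essentially the same route as the original proof of \cite[Theorem 10.7]{halko2011finding} (which this paper quotes without reproving): the structural bound of Theorem~\ref{deter bound}, conditional Gaussian concentration in $\Omega_2$ with Lipschitz constant $\sigma_{r+1}(B)\|\Omega_1^\dagger\|_2$ and conditional mean $\|\Sigma_2\|_F\|\Omega_1^\dagger\|_F$, and the two tail estimates for $\|\Omega_1^\dagger\|_F$ and $\|\Omega_1^\dagger\|_2$ combined by a union bound. One small clarification: the factor $2$ in $2\gamma^{-p}$ arises because two distinct norms of $\Omega_1^\dagger$ must be controlled (one entering the conditional mean, one entering the Lipschitz constant), not from the conditioning step itself.
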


When implementing HMT, one usually computes the economy-size QR factorization $B\Omega = QR$, that is, $Q$ has orthonormal columns and $R$ is a square upper triangular matrix. One then obtains $\mathcal{P}_{B\Omega} B$ from $Q ( Q^TB )$. The computation of the sketch $B\Omega$ typically dominates the overall computational effort. For example, when $B,\Omega$ are dense unstructured matrices then the sketch requires $O(mn(r+p))$ operations while the overall cost of HMT is $O(mn(r+p)+m(r+p)^2)$.

\subsection{HMT for a parameter-dependent matrix}

We now extend HMT to a parameter-dependent matrix $A \in {C}(D,\mathbb{R}^{m\times n})$, that is, the entries of $A(t)$ depend continuously on a parameter $t \in D \subset \R^d$.
We suggest to compute an approximation of the form 
\begin{equation} \label{eq:parahmt}
    A(t)\approx \mathcal{P}_{A(t)\Omega}A(t)=\big(A(t)\Omega\big)\big(A(t)\Omega\big)^\dagger A(t) \;\;\;\text{for all }t\in D,
\end{equation} for a \emph{constant} (in $t$) DRM $\Omega\in \mathbb{R}^{n\times (r+p)}$.  
For general $A(t)$, to form the approximation \eqref{eq:parahmt}, we need to obtain $A(t)\Omega$, which appears to offer no advantage in terms of computational cost. However, the use of a constant DRM comes with the potential advantage of saving computations when the approximation is performed repeatedly for different $t$ in some cases. This becomes particularly evident when $A(t)\in\mathbb{R}^{m\times n}$ admits an affine linear decomposition of the form \begin{equation}
\label{affine combination}
    A(t)=\sum^k_{i=1}\varphi_i(t)A_i,
\end{equation} for continuous $\varphi_i:D\to \R$ and constant $A_i\in \mathbb{R}^{m\times n}$. 

When $k$ remains modest and the approximation~\eqref{eq:parahmt} is evaluated for many different $t$, the overall cost can be reduced significantly by first computing and storing the sketches $X_i = A_i\Omega$ for $i=1,\ldots,k$ during an offline phase. Subsequently, during an online phase, the sketch $A(t) \Omega = \sum^k_{i=1}\varphi_i(t)X_i$ is cheaply obtained by a linear combination of the matrices $X_i$. To reduce the cost of the next step of HMT, which requires a QR decomposition of $A(t)\Omega$, we precompute an economy-size QR decomposition
\begin{equation} \label{eq:qr}
\begin{bmatrix}
X_1 &X_2 &\cdots &X_k
\end{bmatrix} = QR
\end{equation}
as well as the sketches $Y_i = Q^T X_i$ and $Z_i = A_i^T Q$ for $i = 1,\ldots,k$. As the range of $A(t)\Omega$ is contained in the range of $Q$, {we have
\[
A(t)\Omega = Q Q^T A(t)\Omega =Q\Big(\sum_{i=1}^{k}\varphi_i(t)Q^TA_i\Omega\Big)= Q\Big(\sum_{i=1}^{k}\varphi_i(t) Y_i\Big).
\]
We compute $\tilde Q_t R_t =  \sum_{i=1}^{k}\varphi_i(t) Y_i$ the economy-size QR decomposition of a $k(r+p)\times (r+p)$ matrix and let $Q_t=Q\tilde Q_t$. After that, we obtained an economy-size QR decomposition \[
Q_tR_t= Q\tilde Q_t R_t =A(t)\Omega .
\]} This 
 yields the low-rank approximation \begin{equation*}
    A(t)\approx Q_t Q_t^TA(t) =Q_t \Big[\Tilde{Q}_t^T\Big(\sum^k_{i=1}\varphi_i(t)Q^TA_i\Big)\Big] = 
   Q_t \Big[\Big(\sum^k_{i=1}\varphi_i(t) Z_i\Big) \Tilde{Q}_t \Big]^T.
\end{equation*}
Algorithms \ref{offline phase 2} and \ref{online phase 2} summarize the described procedure. 
\begin{algorithm}[ht]
\caption{HMT for $A(t)=\sum^k_{i=1}\varphi_i(t)A_i$ (offline phase)}\label{offline phase 2}
\algorithmicrequire Matrices $A_i\in \mathbb{R}^{m \times n}$ for $i=1,\ldots,k$ and integer $r+p$.\\
\algorithmicensure  Matrices $Q \in \R^{m\times k(r+p)}$, $Y_i \in \R^{k(r+p)\times (r+p)}$ and $Z_i \in \R^{n\times k(r+p)}$ for $i=1,\ldots,k$.
     \begin{algorithmic}[1]
     \State Generate DRM $\Omega\in \mathbb{R}^{n\times (r+p)}$.
      \State Compute $X_1 = A_1\Omega,\ldots ,X_k = A_k\Omega$.
      \State Compute economy-size QR factorization $\begin{bmatrix}
X_1 &X_2 &\cdots &X_k
\end{bmatrix}=QR.$
\State Compute $Y_1 = Q^TX_1,\ldots ,Y_k = Q^TX_k$.
\State Compute $Z_1 = A^T_1 Q,\ldots ,Z_k = A_k^T Q$.
     \end{algorithmic}
\end{algorithm}
\begin{algorithm}[ht]
\caption{HMT for $A(t)=\sum^k_{i=1}\varphi_i(t)A_i$ (online phase)}\label{online phase 2}
\algorithmicrequire Matrices $Q$, $Y_i$, $Z_i$, functions $\varphi_i \in C(D,\mathbb R)$, for $i=1,\ldots, k$, evaluation points $t_1,\ldots,t_q \in D$.\\
\algorithmicensure  Matrices {$Q_{t_j}, W_{t_j}$ defining low-rank approximations $A(t_j) \approx Q_{t_j} W_{t_j}^T$ for $j=1,\ldots,q$.}
     \begin{algorithmic}[1]
     \For{$j=1,\ldots,q$}
      
      \State Compute economy-size QR factorization $\sum^k_{i=1}\varphi_i(t_j)Y_i=\Tilde{Q}_{t_j} {R_{t_j}}$.
      \State Compute $W_{t_j} = \big(\sum^k_{i=1}\varphi_i(t_j)Z_i\big) \Tilde{Q}_{t_j}$.
      \State Compute $Q_{t_j}=Q\Tilde{Q}_{t_j}$.
     \EndFor
     \end{algorithmic}
\end{algorithm}

{We now discuss the computational cost of Algorithm \ref{offline phase 2} and Algorithm \ref{online phase 2}. We assume that $p = O(r)$ and we let $c_A$ denote an upper bound on the cost of multiplying $A(t)$ or any of the matrices $A_i$ (or their transposes) with a vector. For example, when $A(t)$ is dense and unstructured matrix, then $c_A=2mn$. Using that the economy-size QR factorization in Algorithm~\ref{offline phase 2} requires $O(mk^2 r^2)$ operations, the total cost of the offline phase is
\begin{equation} \label{eq:offline}
O( k^2 r c_A + k^2 r^2 m).
\end{equation}
On the other hand, the online phase requires
\begin{equation} \label{eq:online}
O\big( q\big(k^2 r n+k r^2 (n+m)\big) \big)
\end{equation}
operations. In contrast, applying the standard HMT method $q$ times to each matrix $A(t_j)$ requires
$
O\big( q ( r c_A + r^2 m) \big)
$
operations. Comparing with~\eqref{eq:offline} and~\eqref{eq:online}, the cost of Algorithms~\ref{offline phase 2} and~\ref{online phase 2} compares favorably when $k^2\ll q$ and $\max\{k,r\} \cdot k \ll c_A / m$ or, in other words, when $k$ remains small and it is not cheap to apply matrix-vector product with $A(t)$ or $A(t)^T$. This discussion implicitly assumes that the same value of $r+p$ leads to a comparable approximation error in both approaches, which is indeed confirmed by the numerical  experiments (see Section \ref{sect: numerical}) as well as the theoretical results in the next section.}
\subsection{Error analysis}
\subsubsection{$L^2$ error}
Throughout this section, we will assume that $D\subset \mathbb{R}^d$ is compact. To perform a probabilistic error analysis of the approximation~\eqref{eq:parahmt} for a Gaussian random matrix $\Omega$, 
we first show that the approximation error is measurable.
\begin{lemma}
\label{measurable}
Let $\Omega$ be an $n\times (r+p)$ Gaussian random matrix. Consider the measure space $(D,\mathcal{B}(D),\lambda)$ and the probability space $(\mathbb{R}^{n\times (r+p)},\mathcal{B}({\mathbb{R}^{n\times (r+p)}}),\mu_{\Omega})$, where $\mathcal{B}(\cdot)$ denotes the Borel $\sigma$-algebra of a set, $\lambda$ is the Lebesgue measure and $\mu_\Omega$ is the distribution of $\Omega$. If $A\in {C}(D,\mathbb{R}^{m\times n})$ then the function $f:D\times\mathbb{R}^{n\times (r+p)}\rightarrow \mathbb{R}$ defined by
\begin{equation*}
    f(t,Y):= \|(I-\mathcal{P}_{A(t)Y})A(t)\|^2_F,
\end{equation*} is measurable on the product measure.
\end{lemma}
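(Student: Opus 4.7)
The plan is to bypass the discontinuity of the pseudoinverse (and hence of $\mathcal{P}_{A(t)Y}$) at rank-deficient matrices by approximating $f$ by a jointly continuous family and then passing to a pointwise limit, exploiting that pointwise limits of Borel measurable functions are Borel measurable.

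For each $\epsilon > 0$, I would introduce the Tikhonov-regularized surrogate
\begin{equation*}
g_\epsilon(t,Y) := \bigl\|A(t) - A(t)Y\bigl(Y^T A(t)^T A(t)Y + \epsilon I\bigr)^{-1} Y^T A(t)^T A(t)\bigr\|_F^2.
\end{equation*}
Since $Y^T A(t)^T A(t) Y + \epsilon I$ is symmetric positive definite for every $(t,Y)$, matrix inversion is globally well-defined and continuous here; combined with the assumed continuity of $A$, continuity of matrix multiplication, and continuity of the Frobenius norm, this makes $g_\epsilon$ jointly continuous on $D \times \mathbb{R}^{n\times(r+p)}$, hence Borel measurable.

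Next, I would establish pointwise convergence $g_\epsilon(t,Y) \to f(t,Y)$ as $\epsilon \downarrow 0$. Taking a full SVD $M = U\Sigma V^T$ of $M := A(t)Y$, a direct calculation gives $M(M^T M + \epsilon I)^{-1} M^T = U\,\Sigma(\Sigma^T\Sigma + \epsilon I)^{-1}\Sigma^T\,U^T$, whose diagonal entries are $\sigma_i(M)^2/(\sigma_i(M)^2 + \epsilon)$. Letting $\epsilon \downarrow 0$, these entries tend to the indicators $\mathbb{1}_{\sigma_i(M)>0}$, so $M(M^T M + \epsilon I)^{-1} M^T \to M M^\dagger = \mathcal{P}_{A(t)Y}$; continuity of the Frobenius norm then yields $g_\epsilon(t,Y)\to f(t,Y)$.

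Finally, since $f$ is a pointwise limit of Borel measurable functions $g_\epsilon$, it is Borel measurable on $D \times \mathbb{R}^{n\times(r+p)}$. Both $D$ and $\mathbb{R}^{n\times(r+p)}$ are second countable, so $\mathcal{B}(D) \otimes \mathcal{B}(\mathbb{R}^{n\times(r+p)}) = \mathcal{B}(D \times \mathbb{R}^{n\times(r+p)})$, which gives measurability with respect to the product $\sigma$-algebra as claimed. The main obstacle is the discontinuity of $M\mapsto M^\dagger$ across rank strata, which blocks any direct Carath\'eodory-style argument; the Tikhonov regularization resolves it cleanly, and everything else reduces to standard measure-theoretic bookkeeping.
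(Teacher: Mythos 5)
Your proof is correct and follows essentially the same route as the paper: the paper defines $f_k(t,Y)=\bigl\|\bigl[I-(A(t)Y)(Y^TA(t)^TA(t)Y+\tfrac{1}{k}I)^{-1}(A(t)Y)^T\bigr]A(t)\bigr\|_F^2$, which is exactly your Tikhonov regularization with $\epsilon=1/k$, invokes joint continuity (via the Carath\'eodory-function lemma) for measurability of each $f_k$, and passes to the pointwise limit using a cited convergence result for the regularized pseudoinverse. Your only deviations are cosmetic: you verify the limit by an explicit SVD computation and justify product-measurability via second countability instead of the Carath\'eodory lemma, both of which are valid substitutes for the paper's citations.
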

\begin{proof}
 For $k \ge 1$, let us define the function
\begin{equation*}
     f_k(t,Y):=\Big\|\Big[I-\big(A(t)Y\big)\big(Y^TA(t)^TA(t)Y+\frac{1}{k}I\big)^{-1}\big(A(t)Y\big)^T\Big]A(t)\Big\|^2_F,
\end{equation*}
which is continuous with respect to $t$ and $Y$. In turn, each $f_k:D\times \mathbb{R}^{n\times(r+p)}\rightarrow \mathbb{R}$ is a Carath\'eodory function~\cite[Definition 4.50]{guide2006infinite}. Additionally using that $\mathbb{R}^d$, $D$ are metrizable and separable, this allows us 
to apply~\cite[Lemma 4.51]{guide2006infinite} and conclude that $f_k$ is measurable on the product measure.
By \cite[Theorem 4.3]{barata2012moore}, \[ \big(Y^TA(t)^TA(t)Y+\frac{1}{k}I\big)^{-1}\big(A(t)Y\big)^T \stackrel{k\to\infty}{\to} \big(A(t)Y\big)^\dagger \] and thus 
$f_k(t,Y) \stackrel{k\to\infty}{\to} f(t,Y)$ pointwise. 
By \cite[Corollary 8.9]{schilling2017measures}, the pointwise limit of a sequence of measurable real-valued functions is measurable; therefore $f(t,Y)$ is also measurable on the product measure.
\end{proof}

\begin{remark}
Note that the function $f$ defined in Lemma~\ref{measurable} can be discontinuous. For example, let $t\in [-1,1]$ and
\begin{equation*}
A(t)=\begin{bmatrix}
    1 & 0 & 0 \\
    0 & t & 1 \\
    0 & 0 & 0 \\
\end{bmatrix}, \quad  Y=\begin{bmatrix}
1 & 0 \\
0 & 1 \\
0 & 0
\end{bmatrix}.
\end{equation*} While 
$
    f(t,Y)=\|(I-\mathcal{P}_{A(t)Y})A(t)\|_F^2=0
$ for every $t\neq 0$, we have
\begin{equation*}
    f(0,Y)=\|(I-\mathcal{P}_{A(0)Y})A(t)\|_F^2=\left\|
    \begin{bmatrix}
    1 & 0 & 0 \\
    0 & 0 & 1 \\
    0 & 0 & 0 \\
\end{bmatrix} - \begin{bmatrix}
    1 & 0 & 0 \\
    0 & 0 & 0 \\
    0 & 0 & 0 \\
\end{bmatrix}\right\|_F^2=1.
\end{equation*}
\end{remark}

Lemma~\ref{measurable} allows us to apply Tonelli's theorem in order to bound the expected error of our approximation~\eqref{eq:parahmt}.
\begin{theorem}
\label{l2-exp}
Consider $A\in {C}(D,\mathbb{R}^{m\times n})$ and an $n\times (r+p)$ Gaussian random matrix  $\Omega$ with $r\ge 2$, $p \ge 2$. Then 
\begin{equation} \label{hmtexp-bound}
    \mathbb{E}\Big[\int_{D}\|(I-\mathcal{P}_{A(t)\Omega})A(t)\|^2_F\,\mathrm{d}t\Big]\leq \Big(1+\frac{r}{p-1}\Big)\Big(\int_{D}\sum_{j> r}\sigma_j\big(A(t)\big)^2\,\mathrm{d}t\Big).
\end{equation}\end{theorem}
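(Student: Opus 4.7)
The plan is to bound the expected $L^2$ error by combining two ingredients: Tonelli's theorem, to exchange the expectation and the integral over $D$, and the existing constant-matrix HMT bound (Theorem \ref{HMT}), applied pointwise in $t$ to the deterministic matrix $A(t)$.

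First I would verify that Tonelli's theorem applies. Lemma \ref{measurable} already gives that the function $f(t,Y) := \|(I-\mathcal{P}_{A(t)Y})A(t)\|_F^2$ is measurable on the product $\sigma$-algebra, and $f$ is nonnegative. Since $D$ is compact, $(D,\mathcal{B}(D),\lambda)$ is a finite, hence $\sigma$-finite, measure space, and the Gaussian distribution on $\mathbb{R}^{n\times(r+p)}$ is a probability measure, so both factors are $\sigma$-finite. Tonelli's theorem therefore yields
\begin{equation*}
\mathbb{E}\Big[\int_{D} \|(I-\mathcal{P}_{A(t)\Omega})A(t)\|_F^2 \,\mathrm{d}t\Big]
= \int_{D}\mathbb{E}\big[\|(I-\mathcal{P}_{A(t)\Omega})A(t)\|_F^2\big]\,\mathrm{d}t,
\end{equation*}
with both sides well-defined in $[0,\infty]$.

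Next, for each fixed $t\in D$ the matrix $A(t)$ is deterministic and $\Omega$ is an $n\times(r+p)$ Gaussian DRM. The hypotheses of Theorem \ref{HMT} depend only on the dimensions of $\Omega$ and on $r,p\ge 2$, which are fixed throughout, so they hold uniformly in $t$. Hence
\begin{equation*}
\mathbb{E}\big[\|(I-\mathcal{P}_{A(t)\Omega})A(t)\|_F^2\big] \;\leq\; \Big(1 + \frac{r}{p-1}\Big)\sum_{j>r}\sigma_j\big(A(t)\big)^2
\end{equation*}
for every $t\in D$. Integrating this pointwise estimate over $t$ and combining with the previous identity gives the claimed bound \eqref{hmtexp-bound}.

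I do not foresee a genuinely hard step: Theorem \ref{HMT} is invoked as a black box, and the only subtlety is the measurability/Tonelli bookkeeping, which has been isolated into Lemma \ref{measurable}. The main conceptual difficulty, namely that $t\mapsto f(t,\Omega)$ need not be continuous (as illustrated in the remark following the lemma), was already overcome in proving Lemma \ref{measurable} via the regularized-pseudoinverse construction, so it does not resurface here.
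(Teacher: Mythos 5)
Your proposal is correct and follows essentially the same route as the paper: verify $\sigma$-finiteness of both measure spaces, invoke Lemma \ref{measurable} for measurability, apply Tonelli's theorem to exchange $\mathbb{E}$ and $\int_D$, and then use Theorem \ref{HMT} pointwise in $t$. No gaps.
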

\begin{proof}
{Consider the measure spaces $(D,\mathcal{B}({D}),\lambda)$ and $(\mathbb{R}^{n\times (r+p)},\mathcal{B}({\mathbb{R}^{n\times (r+p)}}),\mu_{\Omega})$ from Lemma~\ref{measurable}. Since $\lambda(D)<\infty$ (because $D$ is compact) and $\mu_{\Omega} (\mathbb{R}^{n\times (r+p)})=1$, both spaces are $\sigma$-finite}. By Lemma \ref{measurable}, $\|(I-\mathcal{P}_{A(t)\Omega})A(t)\|^2_F$ is a non-negative measurable function and thus, by Tonelli's theorem, we conclude that
\begin{align*}
     \mathbb{E}\Big[\int_{D}\|(I-\mathcal{P}_{A(t)\Omega})A(t)\|^2_F\,\mathrm{d}t\Big]
     &=\int_{D}\mathbb{E}[\|(I-\mathcal{P}_{A(t)\Omega})A(t)\|^2_F]\,\mathrm{d}t
     \\&
     \leq \int_{D}\Big(1+\frac{r}{p-1}\Big) \sum_{j> r}\sigma_j\big(A(t)\big)^2 \,\mathrm{d}t,
\end{align*}
where the last inequality follows from Theorem \ref{HMT}.
\end{proof}
The second factor in the right-hand side of~\eqref{hmtexp-bound} is the (squared) $L^2$ approximation error obtained when computing the best rank-$r$ approximation for each $t$ with the SVD. In comparison, the expected error of the much simpler approximation~\eqref{eq:parahmt} is only moderately larger. When $A(t) \equiv B$ for a constant matrix $B \in \R^{m\times n}$, then Theorem \ref{l2-exp} coincides with Theorem \ref{HMT}.

Our tail bound will be derived from bounds on the higher-order moments of the approximation error. For this purpose, we need to control higher-order moments of the Frobenius norm of the pseudo-inverse of a Gaussian random matrix. For a random variable $Z$, we let $\mathbb{E}^q(Z) := (\mathbb{E}|Z|^q)^{1/q}$ for $q\in \mathbb R$ with $q\geq 1$.

\begin{lemma}
\label{Lq_psudo}
Let $\Omega$ be an $r\times (r+p)$ Gaussian random matrix with 
 $p\geq4$ and set $q=p/2$. Then
\begin{equation*}
    \mathbb{E}^q[\|\Omega^\dagger\|_F^2]\leq r\left(\frac{\Gamma(\frac{1}{2})}{2^q\Gamma(q+\frac{1}{2})}\right)^{\frac{1}{q}}
\end{equation*} 
\end{lemma}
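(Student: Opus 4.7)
My plan is to reduce $\mathbb{E}^q[\|\Omega^\dagger\|_F^2]$ to a moment bound for the reciprocal of a chi-squared random variable. Since $\Omega\in\mathbb{R}^{r\times(r+p)}$ has full row rank almost surely, one has $\Omega^\dagger=\Omega^T(\Omega\Omega^T)^{-1}$ and therefore
\[
\|\Omega^\dagger\|_F^2=\operatorname{tr}\bigl((\Omega\Omega^T)^{-1}\bigr)=\sum_{i=1}^r e_i^T(\Omega\Omega^T)^{-1}e_i.
\]
Because $q=p/2\geq 2$, Minkowski's inequality applied to the $L^q$-norm yields
\[
\mathbb{E}^q\bigl[\|\Omega^\dagger\|_F^2\bigr]\leq \sum_{i=1}^r \mathbb{E}^q\bigl[e_i^T(\Omega\Omega^T)^{-1}e_i\bigr],
\]
so it suffices to identify the marginal distribution of each $e_i^T(\Omega\Omega^T)^{-1}e_i$ and bound its $q$-th moment.

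For the distributional step I would apply the Schur complement formula to the block partition of $\Omega\Omega^T$ that isolates its $(i,i)$ entry, obtaining
\[
\bigl(e_i^T(\Omega\Omega^T)^{-1}e_i\bigr)^{-1}=\omega_i^T(I-P_i)\omega_i,
\]
where $\omega_i^T$ denotes the $i$-th row of $\Omega$ and $P_i$ is the orthogonal projection onto the row span of the remaining $r-1$ rows. Conditionally on those rows, $I-P_i$ is an orthogonal projection of rank $(r+p)-(r-1)=p+1$ almost surely, and $\omega_i\sim\mathcal{N}(0,I_{r+p})$ is independent of them; hence the quadratic form is $\chi^2_{p+1}$-distributed. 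Consequently $e_i^T(\Omega\Omega^T)^{-1}e_i\sim 1/\chi^2_{p+1}$ for every $i$, and the right-hand side above collapses to $r\,\mathbb{E}^q[1/\chi^2_{p+1}]$.

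Finally, I would compute $\mathbb{E}[(\chi^2_{p+1})^{-q}]$ directly from the chi-squared density: the substitution $u=x/2$ in the defining integral gives
\[
\mathbb{E}\bigl[(\chi^2_{p+1})^{-q}\bigr]=\frac{\Gamma\bigl((p+1)/2-q\bigr)}{2^q\,\Gamma\bigl((p+1)/2\bigr)}=\frac{\Gamma(1/2)}{2^q\,\Gamma(q+1/2)},
\]
where the last equality uses $q=p/2$; the integral converges because $(p+1)/2-q=1/2>0$. Taking the $q$-th root and multiplying by $r$ then yields the claimed bound. I expect the identification of the marginal distribution of the diagonal entries of $(\Omega\Omega^T)^{-1}$ via the Schur complement to be the only non-routine step; the Minkowski reduction and the Gamma-function integral are standard.
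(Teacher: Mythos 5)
Your proof is correct and follows essentially the same route as the paper's: reduce $\|\Omega^\dagger\|_F^2$ to the trace of the inverse Wishart matrix, identify each diagonal entry as an inverse $\chi^2_{p+1}$ variable, apply the triangle inequality for $\mathbb{E}^q$, and evaluate the negative $\chi^2$ moment via the Gamma function. The only difference is that you prove the two auxiliary facts (the Schur-complement identification of the diagonal entries and the negative-moment formula) from scratch, whereas the paper simply cites them as Propositions A.5 and A.8 of Halko--Martinsson--Tropp.
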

\begin{proof}
The proof follows closely the proof of \cite[Therorem A.7]{halko2011finding}.
First, note that
    \begin{equation*}
       \|\Omega^\dagger\|^2_F=\text{trace}\big[(\Omega^\dagger)^T\Omega^\dagger \big]=\text{trace}\big[(\Omega\Omega^T)^{-1} \big]=\sum_{j=1}^r[(\Omega\Omega^T)^{-1}]_{jj},
    \end{equation*} where the second equality holds almost surely since the Wishart matrix $\Omega\Omega^T$ is invertible with probability 1. Then by \cite[Proposition A.5]{halko2011finding}, $[(\Omega\Omega^T)^{-1}]_{jj} =  X_j^{-1}$ with $X_j\sim \chi^2_{p+1}$ for $j=1,\ldots,r$. Taking the $q$th moment and using the triangle inequality, we get
    \begin{equation*}
        \mathbb{E}^q{[\|\Omega^\dagger\|^2_F]}=\mathbb{E}^q{\big[\sum_{j=1}^rX_j^{-1}\big]}\leq\sum_{j=1}^r\mathbb{E}^q[X_j^{-1}].
    \end{equation*}By \cite[Proposition A.8]{halko2011finding}, 
    \begin{equation*}
    \mathbb{E}^q[X_j^{-1}]=\Big(\frac{\Gamma((p+1)/2-q)}{2^{q}\Gamma((p+1)/2)}\Big)^{\frac{1}{q}}=\Big(\frac{\Gamma(1/2)}{2^{q}\Gamma(q+1/2)}\Big)^{\frac{1}{q}},\quad j = 1,\ldots,r,
    \end{equation*}
     which completes the proof.
    \end{proof}
We also need to control higher-order moments of the Frobenius norm of products involving Gaussian random matrices.
For $q = 1$, the following Lemma coincides with~\cite[Proposition 10.1]{halko2011finding}.
\begin{lemma}
\label{L2norm}
Let $\Omega$ be a Gaussian random matrix. For any $q\geq 1$ and any (fixed) real matrices $S$, $T$ of compatible size, it holds that \begin{equation*}
    \mathbb{E}^q[\|S\Omega T\|_F^2]\leq \left(\frac{2^q\Gamma(\frac{1}{2}+q)}{\Gamma(\frac{1}{2})}\right)^{\frac{1}{q}} \|S\|^2_F\|T\|_F^2.
\end{equation*}
\begin{proof}
Because of the invariance of Gaussian random matrices under orthogonal transformations, we may assume that $S$ and $T$ are diagonal matrices and, hence, 
\begin{equation*}
    \mathbb{E}^q[\|S\Omega T\|_F^2]= \mathbb{E}^q\Big[\sum_{jk}|s_{jj}\Omega_{jk}t_{kk}|^2\Big].
\end{equation*}
Using the triangle inequality,
\begin{equation*}
    \mathbb{E}^q\Big[\sum_{jk}|s_{jj}\Omega_{jk}t_{kk}|^2\Big]\leq \sum_{jk}\mathbb{E}^q\Big[|s_{jj}\Omega_{jk}t_{kk}|^2\Big]=\sum_{jk}|s_{jj}|^2|t_{kk}|^2\mathbb{E}^q\Big[|\Omega_{jk}|^2\Big].
\end{equation*} Note that $|\Omega_{jk}|^2$ is a $\chi^2$ random variable with 1 degree of freedom. Therefore, by \cite[Theorem 3.3.2]{hogg1995introduction},\begin{equation*}
    \mathbb{E}^q\Big[|\Omega_{jk}|^2\Big]{=\left(\frac{2^q\Gamma(\frac{1}{2}+q)}{\Gamma(\frac{1}{2})}\right)^{\frac{1}{q}}},
\end{equation*}
which completes the proof.
\end{proof}
\end{lemma}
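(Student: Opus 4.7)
My plan is to exploit the orthogonal invariance of Gaussian matrices to reduce the statement to the case of diagonal $S$ and $T$, apply Minkowski's inequality componentwise, and finally invoke the explicit moment formula for a chi-squared random variable.

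First, I would take SVDs $S=U_S\Sigma_S V_S^T$ and $T=U_T\Sigma_T V_T^T$. Since the Frobenius norm is unitarily invariant and $V_S^T \Omega U_T$ has the same Gaussian distribution as $\Omega$, the random variable $\|S\Omega T\|_F^2$ has the same distribution as $\|\Sigma_S \Omega' \Sigma_T\|_F^2$ for some Gaussian $\Omega'$. Because $\|S\|_F=\|\Sigma_S\|_F$ and $\|T\|_F=\|\Sigma_T\|_F$, it suffices to prove the inequality under the assumption that $S=\diag(s_{jj})$ and $T=\diag(t_{kk})$ are diagonal.

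Under that assumption, the entries of $S\Omega T$ are $s_{jj}\Omega_{jk}t_{kk}$, so
$$\|S\Omega T\|_F^2 \;=\; \sum_{j,k} s_{jj}^2 t_{kk}^2 \Omega_{jk}^2.$$
Next I would apply the triangle inequality for the $L^q$ norm (Minkowski) to this nonnegative sum to obtain
$$\mathbb{E}^q\Big[\sum_{j,k} s_{jj}^2 t_{kk}^2 \Omega_{jk}^2\Big] \;\leq\; \sum_{j,k} s_{jj}^2 t_{kk}^2 \, \mathbb{E}^q\!\big[\Omega_{jk}^2\big].$$

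Finally, because each $\Omega_{jk}$ is standard normal, $\Omega_{jk}^2$ is a $\chi^2_1$ random variable, and a direct computation with the gamma function (or a table lookup) gives $\mathbb{E}\big[\Omega_{jk}^{2q}\big] = 2^q\Gamma(q+\tfrac12)/\Gamma(\tfrac12)$. Substituting this identity and recognizing $\sum_{j,k} s_{jj}^2 t_{kk}^2 = \|S\|_F^2\,\|T\|_F^2$ would complete the argument. There is no serious obstacle, but the one step that requires care is the application of Minkowski: it must be used in the form $(\mathbb{E}|\sum Z_i|^q)^{1/q}\leq\sum(\mathbb{E}|Z_i|^q)^{1/q}$ so that the $\mathbb{E}^q$ quantities combine additively rather than through a $q$-th power; a naive application would yield a considerably worse bound.
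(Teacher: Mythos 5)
Your proposal is correct and follows essentially the same route as the paper: reduce to diagonal $S$, $T$ by orthogonal invariance, apply the $L^q$ triangle inequality termwise, and conclude with the $\chi^2_1$ moment formula. Your explicit justification of the reduction via SVDs and your caution about the correct form of Minkowski's inequality are both sound refinements of the paper's terser presentation.
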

We are now ready to establish bounds on the higher-order moments of the approximation error. Using the Markov inequality, we can then derive a tail bound for the approximation error.
\begin{lemma}
\label{pmoment}
Consider $A\in {C}(D,\mathbb{R}^{m\times n})$ and an $n\times (r+p)$ Gaussian random matrix  $\Omega$ with $r\ge 2$, $p \ge 4$. Then
\begin{equation*}
    \mathbb{E}^q\Big[\int_{D}\|(I-\mathcal{P}_{A(t)\Omega})A(t)\|_F^2\,\mathrm{d}t\Big]\leq (1+r)\Big(\int_{D}\sum_{j> r}\sigma_j\big(A(t)\big)^2\,\mathrm{d}t\Big),
\end{equation*} where $q=p/2$.
\end{lemma}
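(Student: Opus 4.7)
My plan is to follow the strategy of Theorem \ref{l2-exp}, upgrading expectations to $L^q$ norms and inserting an extra conditioning step to handle the random projection. First, since $L^q$ satisfies Minkowski's integral inequality and the integrand is non-negative and jointly measurable by Lemma \ref{measurable}, one has
\[
\mathbb{E}^q\Big[\int_{D}\|(I-\mathcal{P}_{A(t)\Omega})A(t)\|_F^2\,\mathrm{d}t\Big]\leq \int_D \mathbb{E}^q\big[\|(I-\mathcal{P}_{A(t)\Omega})A(t)\|_F^2\big]\,\mathrm{d}t.
\]
It therefore suffices to prove the pointwise bound $\mathbb{E}^q\big[\|(I-\mathcal{P}_{A(t)\Omega})A(t)\|_F^2\big]\leq (1+r)\sum_{j>r}\sigma_j(A(t))^2$ for each fixed $t\in D$, after which integration over $D$ gives the claim.

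For a fixed $t$, I apply the deterministic structural bound of Theorem \ref{deter bound} to $A(t)$, which yields
\[
\|(I-\mathcal{P}_{A(t)\Omega})A(t)\|_F^2\leq \|\Sigma_2(t)\|_F^2+\|\Sigma_2(t)\big(V_2(t)^T\Omega\big)\big(V_1(t)^T\Omega\big)^\dagger\|_F^2,
\]
where $\Sigma_2(t), V_1(t), V_2(t)$ are the blocks of the SVD of $A(t)$ defined as in \eqref{partition}. Applying the triangle inequality for $\mathbb{E}^q$ splits off the deterministic piece $\|\Sigma_2(t)\|_F^2 = \sum_{j>r}\sigma_j(A(t))^2$, so only the random piece remains to be bounded.

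To control the random piece, I use the rotational invariance of Gaussian matrices: since $[V_1(t)\;V_2(t)]$ is orthogonal, the blocks $\Omega_1(t):=V_1(t)^T\Omega$ and $\Omega_2(t):=V_2(t)^T\Omega$ are independent Gaussian matrices of sizes $r\times(r+p)$ and $(n-r)\times(r+p)$ respectively. Conditioning on $\Omega_1(t)$ and applying Lemma \ref{L2norm} with $S=\Sigma_2(t)$ and $T=\Omega_1(t)^\dagger$ controls the conditional $q$th moment $\mathbb{E}\big[\|\Sigma_2(t)\Omega_2(t)\Omega_1(t)^\dagger\|_F^{2q}\mid \Omega_1(t)\big]$; taking the outer expectation and invoking Lemma \ref{Lq_psudo} then bounds $\mathbb{E}\big[\|\Omega_1(t)^\dagger\|_F^{2q}\big]$. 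The one step that needs to be observed is that the Gamma-function constants delivered by Lemma \ref{L2norm} and Lemma \ref{Lq_psudo} are exact reciprocals of each other, so their product telescopes to $r^q$ and gives $\mathbb{E}^q\big[\|\Sigma_2(t)\Omega_2(t)\Omega_1(t)^\dagger\|_F^2\big]\leq r\|\Sigma_2(t)\|_F^2$. Combining this with the deterministic piece yields the pointwise $(1+r)$ bound, and integrating over $D$ completes the argument. The main (mild) obstacle is the careful accounting of moments versus $L^q$ norms across the conditioning step; once that is set up cleanly, the Gamma-function cancellation is what makes the clean constant $1+r$ appear.
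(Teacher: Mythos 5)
Your proposal is correct and follows essentially the same route as the paper's proof: Minkowski's integral inequality to reduce to a pointwise bound, the structural bound of Theorem~\ref{deter bound}, rotational invariance to get independent blocks $\Omega_1(t),\Omega_2(t)$, conditioning on $\Omega_1(t)$ with Lemma~\ref{L2norm} and Lemma~\ref{Lq_psudo}, and the exact cancellation of the Gamma-function constants yielding the factor $r$. No gaps to report.
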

\begin{proof} As in Lemma \ref{measurable}, consider the measure space $(D,\mathcal{B}({D}),\lambda)$ and the probability space $(\mathbb{R}^{n\times (r+p)},\mathcal{B}({\mathbb{R}^{n\times (r+p)}}),\mu_{\Omega})$, which are $\sigma$-finite measure spaces. By Lemma \ref{measurable}, $\|(I-\mathcal{P}_{A(t)\Omega})A(t)\|_F^2$ is measurable on the product measure. This allows us to apply Minkowski's integral inequality~\cite[p.194]{folland1999real} and obtain \begin{align*}
    \mathbb{E}^q\Big(\int_{D}\|(I-\mathcal{P}_{A(t)\Omega})A(t)\|_F^2\,\mathrm{d}t\Big)&=\Big(\mathbb{E}\Big(\int_{D}\|(I-\mathcal{P}_{A(t)\Omega})A(t)\|_F^2 \,\mathrm{d}t\Big)^q\Big)^{\frac{1}{q}}\\&\leq \int_{D}\mathbb{E}^q(\|(I-\mathcal{P}_{A(t)\Omega})A(t)\|_F^2)\,\mathrm{d}t.
\end{align*}
For each fixed $t\in D$, we partition the (pointwise) SVD as in~\eqref{partition}: \begin{equation*}
    A(t)=U(t)\begin{bmatrix}
    \Sigma_1(t)& \\
    &\Sigma_2(t) \\
   
    \end{bmatrix}V(t)^T=U(t)
    \begin{bmatrix}
    \Sigma_1(t)& \\
    &\Sigma_2(t) \\

    \end{bmatrix}\begin{bmatrix}
    V_1(t)^T \\
    V_2(t)^T \\
 
    \end{bmatrix},
\end{equation*} where $\Sigma_1(t)\in \mathbb{R}^{r\times r}$, $\Sigma_2(t)\in \mathbb{R}^{(m-r)\times (n-r)}$, $V_1(t)^T\in \mathbb{R}^{r\times n}$ and $V_2(t)^T\in \mathbb{R}^{(n-r)\times n}$. Let $\Omega_1(t):=V_1(t)^T\Omega\in \mathbb{R}^{r\times (r+p)}$ and $\Omega_2(t):=V_2(t)^T\Omega\in \mathbb{R}^{(n-r)\times (r+p)}$.
By Theorem \ref{deter bound}, 
\begin{equation} \label{eq:bound}
    \|(I-\mathcal{P}_{A(t)\Omega})A(t)\|_F^2\leq \|\Sigma_2(t)\|_F^2+\|\Sigma_2(t)\Omega_2(t)\Omega_1(t)^\dagger\|_F^2.
\end{equation}
For \emph{fixed} $t$, we have that $\Omega_1(t)$ and $\Omega_2(t)$ are independent Gaussian random matrices and $\Omega_1(t)$ has almost surely full row rank. To bound the higher moments of the second term in~\eqref{eq:bound}, we use the law of total expectation as well as Lemma \ref{L2norm} and Lemma \ref{Lq_psudo} to obtain
\begin{align*}
    \mathbb{E}^q(\|\Sigma_2(t)\Omega_2(t)\Omega_1(t)^\dagger\|_F^2)&\leq \mathbb{E}^q(\|\Sigma_2(t)\Omega_2(t)\Omega_1(t)^\dagger\|_F^2)\\&=\Big[\mathbb{E}(\|\Sigma_2(t)\Omega_2(t)\Omega_1(t)^\dagger\|_F^{2q})\Big]^\frac{1}{q}\\
    &=\Big[\mathbb{E}\big[\mathbb{E}\big(\|\Sigma_2(t)\Omega_2(t)\Omega_1(t)^\dagger\|_F^{2q}\big|\Omega_1(t)\big)\big]\Big]^\frac{1}{q}\\
    &\leq \left(\frac{2^q\Gamma(\frac{1}{2}+q)}{\Gamma(\frac{1}{2})}\right)^{\frac{1}{q}} \cdot\Big[\mathbb{E}\big[\|\Sigma_2(t)\|_F^{2q}\|\Omega_1(t)^\dagger\|_F^{2q}\big]\Big]^\frac{1}{q}\\
    &=\left(\frac{2^q\Gamma(\frac{1}{2}+q)}{\Gamma(\frac{1}{2})}\right)^{\frac{1}{q}}\cdot\|\Sigma_2(t)\|_F^{2}\Big[\mathbb{E}\big[\|\Omega_1(t)^\dagger\|_F^{2q}\big]\Big]^\frac{1}{q}
    \\
    &\leq r\left(\frac{2^q\Gamma(\frac{1}{2}+q)}{\Gamma(\frac{1}{2})}\right)^{\frac{1}{q}}\left(\frac{\Gamma(\frac{1}{2})}{2^q\Gamma(q+\frac{1}{2})}\right)^{\frac{1}{q}}\cdot\|\Sigma_2(t)\|_F^{2}
    \\&= r\cdot\|\Sigma_2(t)\|_F^{2}.
\end{align*} In summary, we get
\begin{align} 
    \mathbb{E}^q\Big(\int_{D}\|(I-\mathcal{P}_{A(t)\Omega})A(t)\|_F^2\,\mathrm{d}t\Big)&\leq \int_{D}\mathbb{E}^q(\|(I-\mathcal{P}_{A(t)\Omega})A(t)\|_F^2)\,\mathrm{d}t\nonumber \\ &\leq\int_{D}\mathbb{E}^q(\|\Sigma_2(t)\|_F^2+\|\Sigma_2(t)\Omega_2(t)\Omega_1(t)^\dagger\|_F^2)\,\mathrm{d}t\nonumber \\&\leq(1+r)\int_{D}\|\Sigma_2(t)\|_F^2\,\mathrm{d}t. \label{eq:lp-inequaility}
\end{align}
\end{proof}

\begin{theorem}
\label{l2-tail}
Consider $A\in {C}(D,\mathbb{R}^{m\times n})$ and an $n\times (r+p)$ Gaussian random matrix  $\Omega$ with $r\ge 2$, $p \ge 4$. Then for all $\gamma \geq 1$, the probability that the inequality
\begin{equation*}
   \Big( \int_{D}\|(I-\mathcal{P}_{A(t)\Omega})A(t)\|_F^2\,\mathrm{d}t\Big)^{\frac{1}{2}}< \gamma \cdot\sqrt{1+r} \Big(\int_{D}\sum_{j> r}\sigma_j\big(A(t)\big)^2\,\mathrm{d}t\Big)^{\frac{1}{2}},
\end{equation*} fails is at most $\gamma^{-p}$.
\end{theorem}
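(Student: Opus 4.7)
The plan is to derive the tail bound as a direct consequence of the moment estimate established in Lemma \ref{pmoment} via Markov's inequality, exploiting the fact that the moment order $q = p/2$ is chosen precisely so that $2q = p$ matches the desired exponent on $\gamma$.

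First I would introduce abbreviations $E := \int_{D}\|(I-\mathcal{P}_{A(t)\Omega})A(t)\|_F^2\,\mathrm{d}t$ and $C := \int_{D}\sum_{j>r}\sigma_j(A(t))^2\,\mathrm{d}t$. By Lemma \ref{measurable} combined with Tonelli's theorem, $E$ is a well-defined nonnegative random variable, while $C$ is a deterministic constant. Lemma \ref{pmoment} with $q := p/2 \geq 2$ then reads $(\mathbb{E}[E^q])^{1/q} \leq (1+r)\,C$, i.e., $\mathbb{E}[E^q] \leq (1+r)^q C^q$.

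Next, the event that the stated inequality fails is exactly
$$\{E^{1/2} \geq \gamma\sqrt{1+r}\, C^{1/2}\} = \{E^q \geq \gamma^{2q}(1+r)^q C^q\},$$
where raising both sides to the power $q$ is legitimate since they are nonnegative. Applying Markov's inequality gives
$$\mathbb{P}\bigl(E^q \geq \gamma^{2q}(1+r)^q C^q\bigr) \;\leq\; \frac{\mathbb{E}[E^q]}{\gamma^{2q}(1+r)^q C^q} \;\leq\; \gamma^{-2q} \;=\; \gamma^{-p},$$
which is the desired bound. The degenerate case $C=0$ may be handled separately: then $\mathbb{E}[E^q] = 0$, so $E = 0$ almost surely and the strict inequality holds trivially (equivalently, one can read the above estimate in an extended-reals sense).

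The main obstacle for this theorem has already been overcome in Lemma \ref{pmoment}, whose proof required the careful moment estimates of Lemmas \ref{Lq_psudo} and \ref{L2norm} together with Minkowski's integral inequality. No further obstacle is expected here, since the present proof is essentially a single invocation of Markov's inequality with the moment order deliberately tuned to the target failure exponent.
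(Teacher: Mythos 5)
Your proof is correct and follows essentially the same route as the paper: both apply Markov's inequality to the $q$-th power of the $L^2$ error with $q=p/2$ and invoke the moment bound of Lemma \ref{pmoment} to obtain the failure probability $\gamma^{-2q}=\gamma^{-p}$. Your explicit treatment of the degenerate case $C=0$ is a minor addition the paper omits, but it does not constitute a different argument.
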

\begin{proof}
Let $q=p/2$. By the Markov inequality and Lemma \ref{pmoment}, for any $\gamma\geq 1$,
\begin{align*}
\frac{1}{\gamma^p}&\geq\Pr\Big\{\int_{D}\|(I-\mathcal{P}_{A(t)\Omega})A(t)\|_F^2\,\mathrm{d}t\geq \gamma^2\cdot \mathbb{E}^q\Big[\int_{D}\|(I-\mathcal{P}_{A(t)\Omega})A(t)\|_F^2\,\mathrm{d}t\Big]\Big\}\\
&\geq\Pr\Big\{\int_{D}\|(I-\mathcal{P}_{A(t)\Omega})A(t)\|_F^2\,\mathrm{d}t\geq \gamma^2\cdot(1+r)\int_{D}\|\Sigma_2(t)\|_F^2\,\mathrm{d}t\Big\}\\
&= \Pr\Big\{\Big(\int_{D}\|(I-\mathcal{P}_{A(t)\Omega})A(t)\|_F^2\,\mathrm{d}t\Big)^{\frac{1}{2}}\geq \gamma\cdot\sqrt{1+r}\Big(\int_{D}\|\Sigma_2(t)\|_F^2\,\mathrm{d}t\Big)^{\frac{1}{2}}\Big\}.
\end{align*}
\end{proof}
Theorem \ref{l2-tail} shows that the $L^2$ approximation error deviates, with high probability, by not much more than a factor $\sqrt{1+r}$ from the best rank-$r$ approximation error. Compared to Theorem \ref{HMT tail} when $A(t)$ is a constant matrix, the bound of Theorem \ref{l2-tail} is simpler (because it does not feature the second term in~\eqref{eq:hmttail}) and slightly worse (because the factor $\sqrt{1+r}$ cannot be mitigated by choosing $p$ sufficiently large). A lot of the simplicity comes from the fact that the $L^2$ norm is used on $D$. 

\subsubsection{Uniform norm error}
Throughout this section, we will assume that $D=[0,T]\subset \mathbb{R}$.
To derive a tail bound for the error in the uniform norm, we will discretize the interval $[0,T]$ and control the error on each sub-interval. To achieve this, we first provide a structural bound for the (standard) HMT approximation error when the projection is perturbed. The bound was derived in~\cite{Connolly2023Probabilistic}.
\begin{lemma}
\label{RSVD pertubation}
Consider $B\in \mathbb{R}^{m\times n}$ and with the notation introduced in \eqref{partition}. Suppose that $V_1^T\Omega\in \mathbb{R}^{r\times (r+p)}$ has full row rank. Then for any $E\in \mathbb{R}^{m\times n}$,
    \begin{equation*}
        \|(I-\mathcal{P}_{(B+E)\Omega})B\|_F\leq \sqrt{ \|\Sigma_2\|^2_F+\|\Sigma_2V_2^T\Omega (V_1^T\Omega)^\dagger \|^2_F}+\|EV_2(V_2^T\Omega) (V_1^T\Omega)^\dagger\|_F+\|E\|_F.
    \end{equation*}
\end{lemma}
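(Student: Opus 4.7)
My plan is to exploit optimality of the orthogonal projection $\mathcal{P}_{(B+E)\Omega}$: for any matrix $M$ whose columns lie in the range of $(B+E)\Omega$ we have $\|(I-\mathcal{P}_{(B+E)\Omega})B\|_F \le \|B-M\|_F$. The right choice, mirroring the standard HMT trick, is
\[
M := (B+E)\Omega\,(V_1^T\Omega)^\dagger V_1^T,
\]
whose columns are obviously in the range of $(B+E)\Omega$. Writing $B - M = B\bigl(I - \Omega(V_1^T\Omega)^\dagger V_1^T\bigr) - E\,\Omega(V_1^T\Omega)^\dagger V_1^T$ and applying the triangle inequality splits the bound into a ``clean'' HMT contribution and a ``perturbation'' contribution.

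For the clean contribution I would mimic the proof of Theorem~\ref{deter bound}. Using $V_1^T\Omega(V_1^T\Omega)^\dagger = I_r$, which is where the full-row-rank hypothesis enters, a direct computation kills the top block: $U_1\Sigma_1 V_1^T\bigl(I-\Omega(V_1^T\Omega)^\dagger V_1^T\bigr)=0$. What remains is $U_2\bigl(\Sigma_2 V_2^T - \Sigma_2 V_2^T\Omega(V_1^T\Omega)^\dagger V_1^T\bigr)$. Decomposing the row space via $I=V_1V_1^T + V_2V_2^T$ produces two orthogonal (in the Frobenius inner product, since $V_2^TV_1=0$) summands, and Pythagoras gives the equality
\[
\|B(I-\Omega(V_1^T\Omega)^\dagger V_1^T)\|_F^2 = \|\Sigma_2\|_F^2 + \|\Sigma_2 V_2^T\Omega(V_1^T\Omega)^\dagger\|_F^2,
\]
which is exactly the first term on the right of the lemma's bound.

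For the perturbation contribution I would first observe that the trailing $V_1^T$ can be discarded for free, since $V_1$ has orthonormal columns, so $\|E\Omega(V_1^T\Omega)^\dagger V_1^T\|_F = \|E\Omega(V_1^T\Omega)^\dagger\|_F$. Then I would split $\Omega = V_1V_1^T\Omega + V_2V_2^T\Omega$ and use $V_1^T\Omega(V_1^T\Omega)^\dagger = I_r$ once more, obtaining
\[
E\,\Omega(V_1^T\Omega)^\dagger = EV_1 + EV_2\,V_2^T\Omega(V_1^T\Omega)^\dagger.
\]
A final triangle inequality, together with the trivial estimate $\|EV_1\|_F \le \|E\|_F$, yields the last two summands in the stated bound.

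None of the individual manipulations is deep; the only step that needs real care is choosing $M$ so that the perturbation term appears with the favorable factor $\Omega(V_1^T\Omega)^\dagger$ rather than anything like $\Omega\Omega^\dagger$, and then honestly isolating the two independent contributions $\|EV_1\|_F$ and $\|EV_2V_2^T\Omega(V_1^T\Omega)^\dagger\|_F$ by splitting $I = V_1V_1^T + V_2V_2^T$ in the right place. The full-row-rank assumption on $V_1^T\Omega$ is used twice, each time through the identity $V_1^T\Omega(V_1^T\Omega)^\dagger = I_r$.
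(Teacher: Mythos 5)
Your proof is correct. The paper itself does not prove this lemma --- it only cites the reference where the bound was derived --- so there is no in-paper argument to compare against; your derivation is a valid self-contained one. Every step checks out: the quasi-optimality inequality $\|(I-\mathcal{P}_{(B+E)\Omega})B\|_F \le \|B-M\|_F$ holds because $(I-\mathcal{P}_{(B+E)\Omega})M=0$ and $I-\mathcal{P}_{(B+E)\Omega}$ is a contraction; the choice $M=(B+E)\Omega(V_1^T\Omega)^\dagger V_1^T$ has its columns in $\mathrm{range}((B+E)\Omega)$; the identity $V_1^T\Omega(V_1^T\Omega)^\dagger=I_r$ (valid under the full-row-rank hypothesis) annihilates the $U_1\Sigma_1V_1^T$ block; the Frobenius orthogonality of $\Sigma_2V_2^T$ and $\Sigma_2V_2^T\Omega(V_1^T\Omega)^\dagger V_1^T$ follows from $V_1^TV_2=0$, giving the Pythagorean first term; and the splitting $\Omega=V_1V_1^T\Omega+V_2V_2^T\Omega$ together with $\|EV_1\|_F\le\|E\|_F$ produces exactly the two perturbation summands. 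This is the natural extension of the proof of Theorem~\ref{deter bound} and recovers that bound when $E=0$.
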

To proceed, we control the Frobenius norm of a parameter-dependent matrix multiplied by a Gaussian matrix.  
\begin{theorem}
\label{Thm:supnorm}
Let $\Omega\in \mathbb{R}^{n\times (r+p)}$ be a Gaussian random matrix and $B$ be a fixed matrix of compatible size.
Suppose $E(t)\in {C}([t_0,t_0+T],\mathbb{R}^{m\times n})$ such that $E(t_0)=0$ and $E(t)$ is Lipschitz continuous with a Lipschitz constant $L\geq0$  in the Frobenius norm. Then for any $u\geq 3$,
    \begin{equation*}
        \Pr \Big\{\sup_{t\in[t_0,t_0+T]}\|E(t)\Omega B\|_F> 2LT\|B\|_F (1+4u)\Big\} \leq e^{-u^2/2}.
    \end{equation*}
\end{theorem}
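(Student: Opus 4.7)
The plan is to bypass any chaining argument by exploiting the one-dimensional domain: I would dominate the supremum by a single integral via the fundamental theorem of calculus, and then invoke Gaussian concentration directly on that integral. Since $E:[t_0,t_0+T]\to\mathbb{R}^{m\times n}$ is Lipschitz in the Frobenius norm with constant $L$, each entry of $E$ is absolutely continuous, so $E'(r)$ exists for almost every $r$ with $\|E'(r)\|_F\leq L$, and $E(t)=\int_{t_0}^{t}E'(r)\,\mathrm{d}r$ by the classical Lebesgue theorem, since $E(t_0)=0$. Right-multiplying by $\Omega B$ and applying Minkowski's inequality for the Frobenius-norm-valued Bochner integral yields the pathwise domination
\[
\sup_{t\in[t_0,t_0+T]}\|E(t)\Omega B\|_F \ \leq\ \int_{t_0}^{t_0+T}\|E'(r)\Omega B\|_F\,\mathrm{d}r \ =:\ Z(\Omega),
\]
and it suffices to control the single scalar random variable $Z(\Omega)$.

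For the mean, a direct calculation from $\mathbb{E}[\Omega_{ij}\Omega_{kl}]=\delta_{ik}\delta_{jl}$ yields the identity $\mathbb{E}\|M\Omega N\|_F^2=\|M\|_F^2\|N\|_F^2$ for any fixed compatible $M,N$. Jensen's inequality then gives $\mathbb{E}\|E'(r)\Omega B\|_F\leq\|E'(r)\|_F\|B\|_F\leq L\|B\|_F$ for almost every $r$, so $\mathbb{E}Z(\Omega)\leq LT\|B\|_F$. For the concentration step, I would verify that $\Omega\mapsto Z(\Omega)$ is Lipschitz in the Frobenius norm: the bound $\|E'(r)(\Omega_1-\Omega_2)B\|_F\leq\|E'(r)\|_F\|\Omega_1-\Omega_2\|_F\|B\|_F$, combined with $\|E'(r)\|_F\leq L$ and integration, gives $|Z(\Omega_1)-Z(\Omega_2)|\leq LT\|B\|_F\|\Omega_1-\Omega_2\|_F$.

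The Borell--Tsirelson--Ibragimov--Sudakov concentration inequality, applied to the standard Gaussian vector $\mathrm{vec}(\Omega)$, then yields
\[
Z(\Omega)\ \leq\ \mathbb{E}Z(\Omega)+LT\|B\|_F\cdot u\ \leq\ LT\|B\|_F(1+u)
\]
with probability at least $1-e^{-u^2/2}$, and the stated bound follows from the crude inequality $1+u\leq 2(1+4u)$ valid for all $u\geq 0$ (so the hypothesis $u\geq 3$ is not strictly needed for this argument). The one genuinely delicate point I expect is justifying the fundamental theorem of calculus for the matrix-valued Lipschitz map $E$ carefully enough that the integrand $\|E'(r)\Omega B\|_F$ is jointly measurable, allowing Fubini/Tonelli to commute the expectation with the integral; however, this reduces to the classical Lebesgue result applied entrywise and should not present a real obstacle.
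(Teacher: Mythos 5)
Your proof is correct, and it takes a genuinely different route from the paper. The paper proves this bound by a chaining argument: it builds nested partitions $T_n$ of $[t_0,t_0+T]$ with $2^{2^n}$ points, controls each increment $\bigl(E(\pi_n(t))-E(\pi_{n-1}(t))\bigr)\Omega B$ via Gaussian concentration for Lipschitz functionals, and takes a union bound over all pairs of partition points; the hypothesis $u\ge 3$ is exactly what makes the resulting series collapse to $e^{-u^2/2}$, and the constants $2$ and $4$ come from the telescoping and the sum $\sum_n 2^{n/2}2^{-2^{n-1}}$. You instead exploit the one-dimensional domain and the Lipschitz hypothesis directly: since $E$ is Lipschitz it is absolutely continuous entrywise, so $E(t)=\int_{t_0}^t E'(s)\,\mathrm{d}s$ with $\|E'(s)\|_F\le L$ a.e., and the triangle inequality for the integral dominates the supremum pathwise by the single random variable $Z(\Omega)=\int_{t_0}^{t_0+T}\|E'(s)\Omega B\|_F\,\mathrm{d}s$. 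Your verifications that $\mathbb{E}Z\le LT\|B\|_F$ (via $\mathbb{E}\|M\Omega N\|_F^2=\|M\|_F^2\|N\|_F^2$ and Jensen/Tonelli) and that $Z$ is $LT\|B\|_F$-Lipschitz in $\Omega$ (via $\|MXN\|_F\le\|M\|_F\|X\|_F\|N\|_F$) are both sound, so a single application of the same concentration inequality the paper uses for its increments (Borell--TIS, i.e.\ \cite[Proposition 10.3]{halko2011finding}) finishes the argument. The measurability points you flag (a.e.\ differentiability on the intersection of finitely many full-measure sets, $E'$ as an a.e.\ limit of continuous difference quotients, joint measurability of $(s,\Omega)\mapsto\|E'(s)\Omega B\|_F$ as a Carath\'eodory function) all go through. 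Your route is more elementary, yields the sharper threshold $LT\|B\|_F(1+u)$ without the restriction $u\ge 3$, and would simplify the constants downstream in Theorem~\ref{sup-tail}; what the paper's chaining argument buys in exchange is robustness, since it does not rely on differentiating $E$ and would extend to weaker moduli of continuity or to higher-dimensional parameter domains, where the fundamental-theorem-of-calculus reduction to a single $t$-independent integral is no longer available.
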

\begin{proof}
    The following proof is based on chaining \cite{adler2007random,Talagrand2021}. We first let $X(t)=E(t)\Omega B$.
    \begin{enumerate}
        \item \emph{Increment probability.} For any fixed $s,t\in [t_0,t_0+T]$, We first control the norm of $X(t)-X(s)$. Since $\Omega$ is a Gaussian matrix, by the concentration for functions of a Gaussian matrix \cite[Proposition 10.3]{halko2011finding}, we have
        \begin{equation*}
            \Pr\{\|X(t)-X(s)\|_F\geq \mathbb{E}[\|(E(t)-E(s))\Omega B\|_F]+u\cdot \|(E(t)-E(s))\|_F\|B\|_F\}\leq e^{-u^2/2}.
        \end{equation*}
        Note that \begin{align*}
            \mathbb{E}[\|(E(t)-E(s))\Omega B\|_F]\leq& \Big(\mathbb{E}[\|(E(t)-E(s))\Omega B\|^2_F]\Big)^\frac{1}{2}\\& =\|(E(t)-E(s))\|_F\|B\|_F\\& \leq L\|B\|_F|t-s|.
        \end{align*}
        Hence,
        \begin{equation*}
            \Pr\Big\{\|X(t)-X(s)\|_F\geq  L\|B\|_F|t-s|(1+u)\Big\}\leq e^{-u^2/2}.
        \end{equation*}
        \item \emph{Partition.} For $n\geq 0$, we consider a subset $T_n\subset [t_0,t_0+T]$ as a uniform partition of $[t_0,t_0+T]$ with $1$ point for $n=1$ $(T_0=\{t_0\})$ and with $2^{2^n}$ points for $n\geq 1$, and we consider $\pi_n(t)\in T_n$ be the point that is the closest point to $t$ among $T_n$. Note that 
        \begin{equation*}
            X(t)-X(t_0)=\sum_{n\geq 1}\big(X(\pi_n(t))-X(\pi_{n-1}(t))\big).
        \end{equation*} The infinite sum is well-defined and converges with probability one, see \cite[p.16]{adler2007random}. By triangle inequality,
        \begin{equation*}
            \|X(t)-X(t_0)\|_F\leq \sum_{n\geq 1}\|X(\pi_n(t))-X(\pi_{n-1}(t))\|_F.
        \end{equation*} We then control each term of the sum.
        \item \emph{Controlling each interval.} We define the event $I_u$ by
        \begin{equation*}
            \forall n\geq 1,\;\; \forall t,\;\; \|X(\pi_n(t))-X(\pi_{n-1}(t))\|_F\leq  L|\pi_n(t)-\pi_{n-1}(t)|\|B\|_F (1+2^{n/2}u).      \end{equation*} Since the number of possible pairs  $(\pi_n(t),\pi_{n-1}(t))$ is at most $$\text{card}(T_{n-1})\cdot \text{card}(T_n)=2^{2^n}\cdot 2^{2^{n-1}}\leq 2^{2^{n+1}},$$ it follows from the  union bound that
            \begin{equation*}
                \Pr(I_u^c)\leq \sum_{n\geq1} 2^{2^{n+1}} e^{-(2^{n-1}u^2)}.
            \end{equation*} For $n\geq 1$ and $u\geq 3$, we obtain
            \begin{equation*}
                u^2 2^{n-1}\geq \frac{u^2}{2}+u^2 2^{n-2}\geq \frac{u^2}{2}+2^{n+1}.            \end{equation*}
                Hence $$\Pr(I_u^c)\leq e^{-u^2/2}\sum_{n\geq1} \frac{2^{2^{n+1}}}{e^{2^{n+1}}} \leq e^{-u^2/2}\sum_{n\geq1} \frac{1}{2^n} \leq e^{-u^2/2} .$$
             Under the event $I_u$
            \begin{align*}
               \sup_{t\in [t_0,t_0+T]} \|X(t)-X(t_0)\|_F&\leq \sup_{t\in [t_0,t_0+T]}\sum_{n\geq 1}\|X(\pi_n(t))-X(\pi_{n-1}(t))\|_F\\&\leq \sup_{t\in [t_0,t_0+T]}\sum_{n\geq 1}L|\pi_n(t)-\pi_{n-1}(t)|\|B\|_F (1+2^{n/2}u)\\&\leq \sup_{t\in [t_0,t_0+T]}\sum_{n\geq 1}L(|\pi_n(t)-t|+|t-\pi_{n-1}(t)|)\|B\|_F (1+2^{n/2}u)\\&\leq \sup_{t\in [t_0,t_0+T]}L\|B\|_F\sum_{n\geq 1}(2|t-\pi_{n-1}(t)|) (1+2^{n/2}u)\\&\leq 2L\|B\|_F\sum_{n\geq 1}\frac{T}{2^{2^{n-1}}}(1+2^{n/2}u)\\&\leq 2LT\|B\|_F(1+\sum_{n\geq 1}\frac{2^{n/2}}{2^{2^{n-1}}}u).
            \end{align*}
            Finally, we have 
            \begin{equation*}
                \Pr\Big\{ \sup_{t\in [t_0,t_0+T]} \|X(t)-X(t_0)\|_F>2LT\|B\|_F(1+4u)\Big\}\leq e^{-u^2/2}.
            \end{equation*}
    \end{enumerate}
\end{proof}
Instead of using the submultiplicative of Frobenius norm and directly obtain bound by controlling $\|E(t)\|_F\|\Omega B\|_2$, the technique we used in Theorem \ref{Thm:supnorm} provides a result that is independent of the dimension of $\Omega$.

Now, we are ready to establish bounds on the uniform norm approximation error.
\begin{theorem}
\label{sup-tail}
Consider $A\in {C}([0,T],\mathbb{R}^{m\times n})$ and an $n\times (r+p)$ Gaussian random matrix  $\Omega$ with $r\ge 2$, $p \ge 4$. Suppose $A(t)$ is Lipschitz continuous with a Lipschitz constant $L\geq0$  in the Frobenius norm. Then for all $\gamma,k \geq 1$ and $u\geq 3$, the probability that the inequality
\begin{align}
\label{eq:sup tail}
  \sup_{t\in [0,T]}&\|(I-\mathcal{P}_{A(t)\Omega})A(t)\|_F\nonumber \\&\quad\leq \Big(1+\gamma \sqrt{\frac{3r}{p+1}}\Big)\sup_{t\in [0,T]}\Big(\sum_{j>r}\sigma_j\big(A(t)\big)^2\Big)^{\frac{1}{2}}+u\gamma\frac{e\sqrt{r+p}}{p+1} \sup_{t\in [0,T]}\sigma_{r+1}\big(A(t)\big)\nonumber\\&\quad\quad+\frac{2TL}{k}\Big[\Big(1+\gamma \sqrt{\frac{3r}{p+1}}\Big)(1+4u)+1\Big]
\end{align} fails is at most $2k(\gamma^{-p}+e^{-u^2/2})$.
\end{theorem}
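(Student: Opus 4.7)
My plan is to discretize $[0,T]$ into $k$ equal subintervals $I_i=[\tau_{i-1},\tau_i]$ of length $T/k$ (with $\tau_i = iT/k$), control $\sup_{t\in I_i}\|(I-\mathcal{P}_{A(t)\Omega})A(t)\|_F$ for each $i$ separately, and conclude by a union bound over $i=1,\dots,k$. On the $i$th subinterval I set $E_i(t)=A(t)-A(\tau_{i-1})$, which is $L$-Lipschitz in Frobenius norm, vanishes at $\tau_{i-1}$, and satisfies $\|E_i(t)\|_F \le LT/k$. Writing
$$\|(I-\mathcal{P}_{A(t)\Omega})A(t)\|_F \le \|(I-\mathcal{P}_{A(t)\Omega})A(\tau_{i-1})\|_F + \|E_i(t)\|_F$$
and applying Lemma~\ref{RSVD pertubation} with $B=A(\tau_{i-1})$ and $E=E_i(t)$ (so that $(B+E)\Omega=A(t)\Omega$) reduces the problem to controlling three random quantities: the fixed-$t$ HMT residual $R_i:=\sqrt{\|\Sigma_2(\tau_{i-1})\|_F^2+\|\Sigma_2(\tau_{i-1})(V_2(\tau_{i-1})^T\Omega)(V_1(\tau_{i-1})^T\Omega)^\dagger\|_F^2}$, the cross term $C_i(t):=\|E_i(t)V_2(\tau_{i-1})(V_2(\tau_{i-1})^T\Omega)(V_1(\tau_{i-1})^T\Omega)^\dagger\|_F$, and the two Lipschitz residuals of size at most $LT/k$.

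For $R_i$, Theorem~\ref{HMT tail} applied to the constant matrix $A(\tau_{i-1})$ gives $R_i \le \bigl(1+\gamma\sqrt{3r/(p+1)}\bigr)\|\Sigma_2(\tau_{i-1})\|_F + u\gamma\tfrac{e\sqrt{r+p}}{p+1}\sigma_{r+1}(A(\tau_{i-1}))$ with failure probability at most $2\gamma^{-p}+e^{-u^2/2}$. For the cross term, the key observation is that because the columns of $[V_1(\tau_{i-1}),V_2(\tau_{i-1})]$ are orthonormal and $\Omega$ is Gaussian, the blocks $V_1(\tau_{i-1})^T\Omega$ and $V_2(\tau_{i-1})^T\Omega$ are \emph{independent} Gaussian matrices. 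I condition on $V_1(\tau_{i-1})^T\Omega$, which freezes $(V_1(\tau_{i-1})^T\Omega)^\dagger$, and then apply Theorem~\ref{Thm:supnorm} on $I_i$ with Lipschitz function $t\mapsto E_i(t)V_2(\tau_{i-1})$ (Lipschitz constant $\le L$ since $V_2(\tau_{i-1})$ has orthonormal columns, vanishing at $\tau_{i-1}$), Gaussian matrix $V_2(\tau_{i-1})^T\Omega$, and fixed factor $B=(V_1(\tau_{i-1})^T\Omega)^\dagger$. This yields $\sup_{t\in I_i}C_i(t) \le 2L(T/k)\|(V_1(\tau_{i-1})^T\Omega)^\dagger\|_F(1+4u)$ with conditional failure probability $e^{-u^2/2}$. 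I then use the Frobenius-norm tail bound $\|(V_1(\tau_{i-1})^T\Omega)^\dagger\|_F \le \gamma\sqrt{3r/(p+1)}$, valid with failure probability $\gamma^{-p}$ and already appearing inside the proof of Theorem~\ref{HMT tail}, to remove the conditioning.

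Combining the three contributions with $\|E_i(t)\|_F \le LT/k$ gives the required subinterval estimate with failure probability at most $2\gamma^{-p}+2e^{-u^2/2}$. A union bound over the $k$ subintervals produces the total failure probability $2k(\gamma^{-p}+e^{-u^2/2})$ asserted in the theorem, and replacing $\|\Sigma_2(\tau_{i-1})\|_F$ and $\sigma_{r+1}(A(\tau_{i-1}))$ by their suprema over $[0,T]$ gives a bound independent of $i$. The main obstacle is the cross-term step: one must justify the conditional application of Theorem~\ref{Thm:supnorm} (specifically, that after conditioning on $V_1(\tau_{i-1})^T\Omega$, the remaining matrix $V_2(\tau_{i-1})^T\Omega$ is genuinely an independent Gaussian and the perturbation $E_i(t)V_2(\tau_{i-1})$ still has $t_0=\tau_{i-1}$ as a zero of Lipschitz constant $\le L$), and then merge the conditional chaining bound with the pseudo-inverse tail bound --- this is the step that injects the factor $\gamma\sqrt{3r/(p+1)}$ into the Lipschitz-residual contribution of the final estimate.
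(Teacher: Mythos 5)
Your proposal is correct and follows essentially the same route as the paper: a uniform partition with $k$ anchor points, Theorem~\ref{HMT tail} applied at each anchor, Lemma~\ref{RSVD pertubation} to split off the perturbation, Theorem~\ref{Thm:supnorm} applied conditionally on $V_1(t_i)^T\Omega$ (exploiting the independence of $V_1(t_i)^T\Omega$ and $V_2(t_i)^T\Omega$) to control the cross term on each subinterval, and a union bound giving $2k(\gamma^{-p}+e^{-u^2/2})$. Your accounting correctly reuses the pseudo-inverse tail event already contained in the proof of Theorem~\ref{HMT tail}, exactly as the paper does, so no extra $\gamma^{-p}$ is incurred.
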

 \begin{proof} We first create a uniform partition of the time interval $[0,T]$: \begin{equation*}
    0=t_0\leq t_1\leq...\leq t_{k}=T.
\end{equation*}
For $u,\gamma\geq 1$, we define the event $E_{\gamma,u}$ as follows: The error bound of Theorem \ref{HMT tail} holds for every $A(t_i)$
and the bound \begin{equation*}
    \|(V_1(t_i)^T\Omega)^\dagger\|_F\leq \Big(1+\gamma\cdot \sqrt{\frac{3r}{p+1}}\Big).
\end{equation*}
holds for $i = 0,\ldots,k-1$. As the latter bound is actually established and used in the proof of Theorem \ref{HMT tail} from~\cite{halko2011finding}, we directly obtain the following result for the failure probability from combining Theorem \ref{HMT tail} with the union bound:
\begin{equation*}
    \Pr(E_{\gamma,u}^c)\leq k(2\gamma^{-p}+e^{-u^2/2}).
\end{equation*}
To proceed, we define an event $\Pi_u$ for which the error in each subinterval is controlled: For every $i=0,\ldots k-1$,\begin{equation*}
    \sup_{t\in[t_i,t_i+\frac{T}{k}]}\big\|\big(A(t)-A(t_i)\big)V_2(t_i)\big(V_2(t_i)^T\Omega\big) \big(V_1(t_i)^T\Omega\big)^\dagger \big\|_F\leq \frac{2LT}{k}\Big(1+\gamma\cdot \sqrt{\frac{3r}{p+1}}\Big) (1+4u).
\end{equation*} As $V_1(t_i)\Omega$ and $V_2(t_i)\Omega$ are independent, to bound $\Pr(\Pi_u^c)$, we can use Theorem \ref{Thm:supnorm} with $E(t)=\big(A(t)-A(t_i)\big)V_2(t_i)$, $B=\big(V_1(t_i)^T\Omega\big)^\dagger$ and the union bound. We conclude
\begin{equation*}
     \Pr(\Pi_u^c)\leq ke^{-u^2/2}.
\end{equation*}

For any $t$, if we take the closest point $t_i$ and by Lemma \ref{RSVD pertubation},
\begin{align*}
    \|(I-\mathcal{P}_{A(t)\Omega})A(t)\|_F&\leq \|(I-\mathcal{P}_{A(t)\Omega})A(t_i)\|_F+\|(I-\mathcal{P}_{A(t)\Omega})\big(A(t)-A(t_i)\big)\|_F\\&\leq \sqrt{ \|\Sigma_2(t_i)\|^2_F+\|\Sigma_2(t_i)(V_2(t_i)^T\Omega) (V_1(t_i)^T\Omega)^\dagger \|^2_F}\\&\quad+\|\big(A(t)-A(t_i)\big)V_2(t_i)(V_2(t_i)^T\Omega) (V_1(t_i)^T\Omega)^\dagger\|_F+2\|A(t)-A(t_i)\|_F.\\&\leq \sqrt{ \|\Sigma_2(t_i)\|^2_F+\|\Sigma_2(t_i)(V_2(t_i)^T\Omega) (V_1(t_i)^T\Omega)^\dagger \|^2_F}\\&\quad+\sup_{t\in[t_i,t_i+\frac{T}{k}]}\|\big(A(t)-A(t_i)\big)V_2(t_i)(V_2(t_i)^T\Omega) (V_1(t_i)^T\Omega)^\dagger\|_F+2\frac{LT}{k}.
    \end{align*}
Under the event $\Pi_u,E_{\gamma,u}$, we obtain,
\begin{align*}
    \|(I-\mathcal{P}_{A(t)\Omega})A(t)\|_F&\leq \Big(1+\gamma\cdot \sqrt{\frac{3r}{p+1}}\Big)\Big(\sum_{j>r}\sigma_j\big(A(t_i)\big)^2\Big)^{\frac{1}{2}}+u\gamma\cdot\frac{e\sqrt{r+p}}{p+1}\cdot \sigma_{r+1}\big(A(t_i)\big)\\&\quad+\frac{2LT}{k}\Big[\Big(1+\gamma\cdot \sqrt{\frac{3r}{p+1}}\Big)(1+4u)+1\Big]\\& \leq \Big(1+\gamma\cdot \sqrt{\frac{3r}{p+1}}\Big)\sup_{t\in [0,T]}\Big(\sum_{j>r}\sigma_j\big(A(t)\big)^2\Big)^{\frac{1}{2}}+u\gamma\frac{e\sqrt{r+p}}{p+1} \sup_{t\in [0,T]}\sigma_{r+1}\big(A(t)\big)\\&\quad+\frac{2LT}{k}\Big[\Big(1+\gamma\sqrt{\frac{3r}{p+1}}\Big)(1+4u)+1\Big]
\end{align*} and by union bound the event fails with probability at most $2k(\gamma^{-p}+e^{-u^2/2})$.
 \end{proof}
Theorem \ref{sup-tail} shows that the uniform norm approximation error deviates by not much more than some factors from the best rank-$r$ approximation error plus an extra term with high probability. Denote $\tau=\sup_{t\in [0,T]}\Big(\sum_{j>r}\sigma_j\big(A(t)\big)^2\Big)^{\frac{1}{2}}$. When $p=O(r)$, we can expect that the first and the third terms in \eqref{eq:sup tail} dominate the bound. By choosing $k=O(\frac{1}{\tau})$ and $\gamma=\frac{s}{\tau^{1/p}}$ for some $s>\tau^{1/p}$, those terms are $O(s\tau^{1-\frac{1}{p}})$ with probability $O(s^{-p})$. This shows that when $p$ is well-chosen, the method is also robust in uniform norm. 

\section{The generalized Nystr\"{o}m method for parameter-dependent matrices}
Before we describe and analyze the generalized Nystr\"{o}m method for parameter-dependent
matrices, we recall basic results of the generalized Nystr\"{o}m method from \cite{tropp2017practical} for a constant matrix $B\in \mathbb{R}^{m\times n}$.

\subsection{Generalized Nystr\"{o}m method for a constant matrix}

Given random DRMs $\Omega \in \mathbb{R}^{n\times (r+p)}$ and $\Psi \in \mathbb{R}^{m\times (r+p+\ell)}$, the generalized Nystr\"{o}m method constructs a low-rank approximation by performing an oblique projection of the columns of $B$ onto $\mathrm{span}(B\Omega)$:
\begin{equation*}
    B\approx \mathcal{P}_{B\Omega,\Psi}B, \quad \mathcal{P}_{B\Omega,\Psi}:=B\Omega(\Psi^TB\Omega)^{\dagger}\Psi^T.
\end{equation*}
When $\Psi^TB\Omega$ has full column rank, one gets the equivalent expression
\begin{equation} \label{eq:nystromprojector}
 \mathcal{P}_{B\Omega,\Psi}B=Q(\Psi^TQ)^\dagger \Psi^TB,
\end{equation}
where $Q$ is an orthonormal basis of $\mathrm{span}(B\Omega)$ computed, e.g., via an economy-size QR factorization $QR=B\Omega$.
The analysis of this approximation relies on the following structural bound.
\begin{theorem}[{\cite[Lemmas A3 and A4]{tropp2017practical}}]
\label{Ny_Structural}
    With the notation introduced above, assume that $\Psi^TB\Omega$ has full column rank. Then
    \begin{equation*}
        \|(I-\mathcal{P}_{B\Omega,\Psi})B\|^2_F=\|(I-QQ^T)B\|_F^2+\|(\Psi^TQ)^\dagger(\Psi^TQ_\perp) (Q^T_\perp B)\big\|_F^2.
    \end{equation*}
    \end{theorem}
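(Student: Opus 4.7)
The plan is to decompose $B$ according to the orthogonal splitting $I = QQ^T + Q_\perp Q_\perp^T$ and exploit the fact that $\mathcal{P}_{B\Omega,\Psi}$ acts as the identity on $\mathrm{range}(Q)$, so that only the $Q_\perp$-component of $B$ survives in the residual. The pieces that are left then lie in mutually orthogonal subspaces, and a single application of the Pythagorean identity finishes the proof.

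The first step is to justify the equivalent expression~\eqref{eq:nystromprojector}. Since $\Psi^T B\Omega$ has full column rank, $B\Omega$ itself has full column rank, so the triangular factor in the economy-size QR factorization $QR=B\Omega$ is invertible. Using $(AR)^\dagger = R^{-1}A^\dagger$ for invertible $R$ and a matrix $A$ of full column rank, applied to $A = \Psi^T Q$, yields
\[
\mathcal{P}_{B\Omega,\Psi} = QR\,(\Psi^T QR)^\dagger \Psi^T = Q(\Psi^T Q)^\dagger \Psi^T.
\]
Since $\Psi^T Q$ has full column rank we have $(\Psi^T Q)^\dagger(\Psi^T Q) = I$, and consequently $\mathcal{P}_{B\Omega,\Psi}(QQ^T B) = QQ^T B$.

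The second step is to compute the residual. Writing $B = QQ^T B + Q_\perp Q_\perp^T B$ and invoking the invariance just established gives
\[
(I-\mathcal{P}_{B\Omega,\Psi})B = Q_\perp Q_\perp^T B \;-\; Q(\Psi^T Q)^\dagger (\Psi^T Q_\perp)(Q_\perp^T B).
\]
The two terms on the right live in the orthogonal subspaces $\mathrm{range}(Q_\perp)$ and $\mathrm{range}(Q)$, respectively, so by the Pythagorean identity
\[
\|(I-\mathcal{P}_{B\Omega,\Psi})B\|_F^2 = \|Q_\perp Q_\perp^T B\|_F^2 + \|Q(\Psi^T Q)^\dagger (\Psi^T Q_\perp)(Q_\perp^T B)\|_F^2.
\]
The claim now follows from $Q_\perp Q_\perp^T = I - QQ^T$ together with $\|QM\|_F = \|M\|_F$, which holds because $Q$ has orthonormal columns.

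I do not anticipate any serious obstacle; the only delicate points are (i) verifying the equivalence of the two forms of $\mathcal{P}_{B\Omega,\Psi}$, which relies on $B\Omega$ having full column rank (an easy consequence of the hypothesis on $\Psi^T B\Omega$), and (ii) being careful that the identity $(\Psi^T Q)^\dagger(\Psi^T Q)=I$ uses the full-column-rank side of the pseudo-inverse relations. Everything else is standard linear algebra.
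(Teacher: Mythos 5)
Your argument is correct: the reduction of $\mathcal{P}_{B\Omega,\Psi}$ to $Q(\Psi^TQ)^\dagger\Psi^T$ via the invertibility of $R$, the observation that this projector fixes $\mathrm{range}(Q)$, and the Pythagorean split of the residual into its $Q$- and $Q_\perp$-components is precisely the argument behind Lemmas A3 and A4 of the cited reference, which the paper invokes without reproducing. No gaps; the two delicate points you flag (full column rank of $B\Omega$ and of $\Psi^TQ$) are handled correctly.
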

\noindent The first term of the sum above is just the HMT error analyzed in Theorem~\ref{HMT}. The second term can also be expected to stay close to the HMT error provided that $\Psi^TQ$ is not too ill-conditioned. These considerations lead to the following bound on the expected error.
\begin{theorem}[{\cite[Theorem 4.3]{tropp2017practical}}]
\label{Ny_error}
Let $\sigma_1(B)\geq \sigma_2(B)\geq\cdots$ denote the singular values of $B\in \mathbb{R}^{m\times n}$. Suppose that $\Omega\in \mathbb{R}^{n\times (r+p)}$ and $\Psi\in \mathbb{R}^{m\times (r+p+\ell)}$ are independent Gaussian random matrices, where $r\geq2$, $p\geq2$ and $\ell\geq 2$.
Then
\begin{equation*}
    \mathbb{E}\|(I-\mathcal{P}_{B\Omega,\Psi})B\|^2_F= \Big({1+\frac{r+p}{\ell-1}}\Big)\mathbb{E}\|(I-\mathcal{P}_{B\Omega})B\|^2_F\leq\Big({1+\frac{r+p}{\ell-1}}\Big)\Big(1+\frac{r}{p-1}\Big)\Big(\sum_{j>r}\sigma_j(B)^2\Big).
\end{equation*}
\end{theorem}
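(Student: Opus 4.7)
The plan is to combine the structural identity of Theorem~\ref{Ny_Structural} with rotational invariance of the Gaussian distribution and standard moment identities, essentially following~\cite{tropp2017practical}. The starting point is
\[
\|(I-\mathcal{P}_{B\Omega,\Psi})B\|^2_F=\|(I-QQ^T)B\|_F^2+\|(\Psi^TQ)^\dagger(\Psi^TQ_\perp) (Q^T_\perp B)\|_F^2,
\]
where $Q$ is any orthonormal basis of $\mathrm{span}(B\Omega)$ and $Q_\perp$ an orthonormal basis of its complement. Both $Q$ and $Q_\perp$ depend only on $\Omega$, so I would condition on $\Omega$ and handle the randomness in $\Psi$ first.

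Conditional on $\Omega$, the matrix $[Q\;Q_\perp]\in\R^{m\times m}$ is a deterministic orthogonal matrix, hence by rotational invariance of the Gaussian distribution, $\Psi^T[Q\;Q_\perp]=[\Psi^TQ\;\;\Psi^TQ_\perp]$ consists of independent Gaussian blocks $G_1:=\Psi^TQ\in\R^{(r+p+\ell)\times(r+p)}$ and $G_2:=\Psi^TQ_\perp\in\R^{(r+p+\ell)\times(m-r-p)}$. Writing $C:=Q_\perp^TB$, the second term becomes $\|G_1^\dagger G_2 C\|_F^2$ with $G_2$ Gaussian and independent of $G_1$. Applying the $q=1$ case of Lemma~\ref{L2norm} (equivalently, Proposition~10.1 of \cite{halko2011finding}) to $\mathbb{E}_{G_2}\|G_1^\dagger G_2 C\|_F^2$ conditionally on $G_1$ yields $\|G_1^\dagger\|_F^2\|C\|_F^2$. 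Next, since $G_1$ is a tall $(r+p+\ell)\times(r+p)$ Gaussian matrix with $\ell\ge 2$, the standard computation used in \cite[Proposition~10.2]{halko2011finding} gives $\mathbb{E}\|G_1^\dagger\|_F^2=\frac{r+p}{\ell-1}$; this is the same $\chi^{-2}$ argument that underlies Lemma~\ref{Lq_psudo} in the case $q=1$.

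Combining these, and using $\|C\|_F^2=\|Q_\perp^T B\|_F^2=\|(I-QQ^T)B\|_F^2=\|(I-\mathcal{P}_{B\Omega})B\|_F^2$, I obtain
\[
\mathbb{E}\bigl[\|(I-\mathcal{P}_{B\Omega,\Psi})B\|^2_F\,\bigm|\,\Omega\bigr]=\Bigl(1+\tfrac{r+p}{\ell-1}\Bigr)\|(I-\mathcal{P}_{B\Omega})B\|_F^2.
\]
Taking the expectation over $\Omega$ yields the claimed equality. The upper bound then follows by invoking Theorem~\ref{HMT} on $\mathbb{E}\|(I-\mathcal{P}_{B\Omega})B\|_F^2$.

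The only delicate points are to check that $\Psi^TB\Omega$ has full column rank almost surely (so that the identity of Theorem~\ref{Ny_Structural} applies and $G_1^\dagger$ behaves like the inverse on the appropriate subspace), which holds with probability one because $B\Omega$ has rank $r+p$ a.s.\ and then $G_1=\Psi^TQ$ is Gaussian of size $(r+p+\ell)\times(r+p)$ with $\ell\ge 1$; and to justify interchanging the inner Gaussian expectation with the conditioning, which is legitimate by independence of $G_1$ and $G_2$. No step should be genuinely difficult, as each ingredient is already available in the literature; the main care is in keeping track of the decomposition $\Psi^T=[G_1\;G_2][Q\;Q_\perp]^T$ and in explicitly separating the conditioning on $\Omega$ from the averaging over $\Psi$.
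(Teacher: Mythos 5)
Your argument is correct and is essentially the proof from \cite[Theorem 4.3]{tropp2017practical}, which the paper cites without reproving: condition on $\Omega$, exploit rotational invariance so that $\Psi^TQ$ and $\Psi^TQ_\perp$ are independent Gaussians, apply the $q=1$ moment identity and $\mathbb{E}\|G_1^\dagger\|_F^2=\tfrac{r+p}{\ell-1}$, and finish with Theorem~\ref{HMT} --- the same ingredients the paper itself assembles for the higher-moment analogue in Theorem~\ref{Ny-l2-tail}. The only slight imprecision is your claim that $B\Omega$ has rank $r+p$ almost surely, which fails when $\operatorname{rank}(B)<r+p$; in that degenerate case the generalized Nystr\"om approximation is exact almost surely and the statement holds trivially, as the paper notes explicitly in the proof of Theorem~\ref{Ny-l2-tail}.
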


The use of the pseudoinverse in~\eqref{eq:nystromprojector} potentially introduces instabilities when the matrix $\Psi^TB\Omega$ is ill-conditioned; see~\cite{nakatsukasa2020fast} for a discussion and numerically stable implementation. The computation of the sketches $B\Omega$ and $\Psi^TB$ typically dominates the overall computational effort, especially when $B$, $\Omega$ and $\Psi$ are dense unstructured matrices.

\subsection{Generalized Nystr\"{o}m method for a parameter-dependent matrix}

Our extension of the generalized Nystr\"{o}m method to a parameter-dependent matrix $A \in C(D,\mathbb{R}^{m\times n})$ takes the form 
\begin{equation}
\label{eq:paragn}
    A(t)\approx \mathcal{P}_{A(t)\Omega,\Psi}A(t)=\big(A(t)\Omega\big) \big(\Psi^TA(t)\Omega\big)^\dagger\big(\Psi^TA(t)\big)\;\;\;\text{for all }t\in D,
\end{equation} where $\Psi\in \mathbb{R}^{n\times (r+p+\ell)}$ and $\Omega\in \mathbb{R}^{n\times (r+p)}$ are \emph{constant} DRMs.

As in the constant matrix case, some care is needed in the implementation of the generalized Nystr\"{o}m method to avoid numerical instability. Algorithm~\ref{Algro: GN_para} follows the implementation of the generalized Nystr\"{o}m method proposed in \cite{nakatsukasa2020fast}. 
\begin{algorithm}[ht]
\caption{{Generalized Nystr\"{o}m method for $A(t)$} }\label{Algro: GN_para}
\algorithmicrequire $A\in \mathcal{C}(D,\mathbb{R}^{m\times n})$ , integers $r+p, \ell>0$ and evaluation points $t_1,\ldots,t_q \in D$.\\
\algorithmicensure  Matrices $Q_{t_j}, W_{t_j}$ defining low-rank approximations $A(t_j) \approx Q_{t_j} W_{t_j}^T$ for $j=1,\ldots,q$.
     \begin{algorithmic}[1]
     \State Generate DRMs $\Omega\in \mathbb{R}^{n\times (r+p)}$ and $\Psi\in \mathbb{R}^{m\times (r+p+\ell)}$.
        \For{$j=1,\ldots,q$}
      \State Compute $X_{t_j} = A(t_j)\Omega$ and $Y_{t_j} = \Psi^TA(t_j)$.
      \State Compute economy-size QR factorization $\Tilde{Q}_{t_j}\Tilde{R}_{t_j}=\Psi^TX_{t_j}$.
      \State Compute $Q_{t_j}=X_{t_j}(\Tilde{R}_{t_j})^\dagger_\epsilon$ and $W_{t_j}=Y_{t_j}^T\Tilde{Q}_{t_j}$.
    \EndFor
     \end{algorithmic}
\end{algorithm}%
In Algorithm \ref{Algro: GN_para}, $R^\dagger_\epsilon$ denotes the $\epsilon$-pseudoinverse of a matrix $R$: Consider an SVD
\begin{equation*}
    R=\begin{bmatrix}
U_1 & U_2
\end{bmatrix}\begin{bmatrix}
 \Sigma_1 & \\
 &\Sigma_2
\end{bmatrix}\begin{bmatrix}
V_1 & V_2
\end{bmatrix}^T,
\end{equation*} where $\Sigma_2$ contains all singular values less than $\epsilon$. Then  $R^\dagger_\epsilon:=U_1\Sigma_1^{-1} V_1^T$. 

The streaming property of generalized Nystr\"{o}m extends to the parameter-dependent case. In particular, when the matrix undergoes update $A(t)\leftarrow A(t)+B(t)$, one can update the sketch via $A(t)\Omega\leftarrow A(t)\Omega+B(t)\Omega$, $\Psi^TA(t)\leftarrow \Psi^TA(t)+\Psi^TB(t)$ and cheaply recompute the low-rank approximation. 

Our method significantly benefits when $A(t)$ admits an affine linear decomposition~\eqref{affine combination}. Similar to Algorithms~\ref{offline phase 2} and~\ref{online phase 2} for HMT, Algorithm~\ref{GN-offline phase 2} and~\ref{GN-online phase 2} realize an offline/online approach to cheaply compute low-rank approximations for many parameter values.

\begin{algorithm}[ht]
\caption{Generalized Nystr\"{o}m method for $A(t)=\sum^k_{i=1}\varphi_i(t)A_i$ (offline phase)}\label{GN-offline phase 2}
\algorithmicrequire Matrices $A_1,\ldots, A_k\in \mathbb{R}^{m \times n}$ and integers $r+p, \ell >0$.\\
\algorithmicensure  Sketches $X_i\in\mathbb{R}^{m\times (r+p)}, Y_i\in \mathbb{R}^{(r+p+\ell)\times n}$ and $Z_i\in \mathbb{R}^{(r+p+\ell)\times(r+p)}$ for $i=1,\ldots,k$.
     \begin{algorithmic}[1]
     \State Generate DRMs $\Omega\in \mathbb{R}^{n\times (r+p)}$ and $\Psi\in \mathbb{R}^{m\times (r+p+\ell)}$.
      \State Compute $X_1 = A_1\Omega,\ldots ,X_k = A_k\Omega$.
       \State Compute $Y_1 = \Psi^TA_1,\ldots ,Y_k = \Psi^TA_k$.
       \State Compute $Z_1 = Y_1\Omega,\ldots ,Z_k = Y_k\Omega$. 
      \end{algorithmic}
\end{algorithm}
\begin{algorithm}[H]
\caption{Generalized Nystr\"{o}m method for $A(t)=\sum^k_{i=1}\varphi_i(t)A_i$ (online phase)}\label{GN-online phase 2}
\algorithmicrequire Matrices $X_i$ $Y_i$, $Z_i$, functions $\varphi_i \in C(D,\mathbb R)$, for $i=1,\cdots, k$, evaluation points $t_1,\ldots,t_q \in D$.\\
\algorithmicensure  Matrices $Q_{t_j}, W_{t_j}$ defining low-rank approximations $A(t_j) \approx Q_{t_j} W_{t_j}^T$ for $j=1,\ldots,q$.
     \begin{algorithmic}[1]
     \For{$j=1,\ldots,q$}
      
      \State Compute economy-size QR factorization $\sum^k_{i=1}\varphi_i(t_j)Z_i=\Tilde{Q}_{t_j}\Tilde{R}_{t_j}$.
      \State Compute $W_{t_j}=\big(\sum^k_{i=1}\varphi_i(t_j)Y_i\big)^T\Tilde{Q}_{t_j}$.
      \State Compute $Q_{t_j}=\big(\sum^k_{i=1}\varphi_i(t_j)X_i\big)(\Tilde{R}_{t_j})^\dagger_\epsilon$.
     \EndFor
     \end{algorithmic}
\end{algorithm}

To estimate the computational cost of Algorithms~\ref{GN-offline phase 2} and~\ref{GN-online phase 2}, we again assume that $p=O(r)$, $\ell=O(r)$, and let $c_A$ denotes an upper bound on the cost of multiplying $A(t)$ or any of the matrices $A_i$ (or their transposes) with a vector. The total cost of the offline phase is \begin{equation*}
O(krc_A+knr^2).
\end{equation*}
On the other hand, the online phase costs 
\begin{equation*}
O\big(q\big(kr(n+m)+r^2(n+m) \big)\big).
\end{equation*}
Compared to applying $q$ times the (standard) generalized Nystr\"{o}m method to each matrix $A(t_j)$ which requires $O(qr(c_a+nr+r^2))$, we can expect there is a cost reduction when $k\ll q$ and $\max\{k,r\}\ll c_a/m$. {When compared to the costs~\eqref{eq:offline} and~\eqref{eq:online} of HMT for an affine linear parameter matrix, the cost of Algorithms~\ref{GN-offline phase 2} and~\ref{GN-online phase 2} has a much more favorable dependence on $k$ because it avoids the large QR decomposition~\eqref{eq:qr}.}

\subsection{Error analysis}

We again assume that $D\subset \mathbb{R}^d$ is compact. Assuming that $\Omega$ and $\Psi$ are Gaussian, we first provide a bound on the expected error for the approximation \eqref{eq:paragn}. Following the structure of the proof of Theorem~\ref{l2-exp}, we first show that the approximation error is measurable. 
\begin{lemma}
\label{measurable_Ny}
Let $\Omega\in \mathbb{R}^{n\times(r+p)}$ and $\Psi\in \mathbb{R}^{n\times(r+p+\ell)}$ be independent Gaussian random matrices. Consider the measure space $\big(D,\mathcal{B}({D}),\lambda\big)$ and the probability spaces $\big(\mathbb{R}^{n\times (r+p)}\times \mathbb{R}^{n\times (r+p+\ell)}, \mathcal{B}({\mathbb{R}^{n\times (r+p)}})\otimes \mathcal{B}({\mathbb{R}^{n\times (r+p+\ell)}}), \mu_{\Omega,\Psi}\big)$, where $\mathcal{B}(\cdot)$ denotes the Borel $\sigma$-algebra of a set, $\lambda$ is the Lebesgue measure and $\mu_{\Omega,\Psi}$ is the joint distribution of $(\Omega,\Psi)$. If $A\in C(D,\mathbb{R}^{m\times n})$ then the function $f:D\times\big(\mathbb{R}^{n\times (r+p)}\times \mathbb{R}^{n\times (r+p+\ell)}\big)\rightarrow \mathbb{R}$ defined by
\begin{equation*}
    f(t,Y,X):=\|(I-\mathcal{P}_{A(t)Y,X})A(t)\|^2_F,\;\;\;
\end{equation*} is measurable on the product measure.
\begin{proof}
For $k\geq1$, we define 
\begin{equation*}
     f_k(t,Y,X):=\Big\|\Big[I-A(t)Y \big(Y^TA(t)^TXX^TA(t)Y+\frac{1}{k}I\big)^{-1}Y^TA(t)^TXX^T \Big]A(t)\Big\|^2_F,
\end{equation*} which is continuous with respect to $t$, $X$ and $Y$. As in the proof of Lemma \ref{measurable}, each $f_k:D\times\big(\mathbb{R}^{n\times (r+p)}\times \mathbb{R}^{n\times (r+p+\ell)}\big)\rightarrow \mathbb{R}$ is a Carath\'eodory function and $f_k(t,Y,X)\stackrel{k\to\infty}{\to} f(t,Y,X)$ pointwise. Hence, $f(t,Y,X)$ is measurable on the product measure.
\end{proof}
\end{lemma}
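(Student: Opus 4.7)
The plan is to mirror the proof of Lemma \ref{measurable} almost verbatim, adjusting for the extra DRM $X$ and for the oblique (rather than orthogonal) projector. First I would write down a Tikhonov-regularized surrogate for the oblique projector. Setting $M(t,Y,X) := X^T A(t) Y \in \mathbb{R}^{(r+p+\ell)\times(r+p)}$, the identity
\[
M^\dagger \;=\; \lim_{k\to\infty} \bigl(M^T M + \tfrac{1}{k}I\bigr)^{-1} M^T
\]
from \cite[Theorem 4.3]{barata2012moore} applied to this $M$ suggests defining
\[
f_k(t,Y,X) := \Bigl\|\bigl[I - A(t)Y\bigl(Y^T A(t)^T X X^T A(t) Y + \tfrac{1}{k}I\bigr)^{-1} Y^T A(t)^T X X^T\bigr]A(t)\Bigr\|_F^2,
\]
which is exactly the surrogate already introduced in the lemma's formulation.

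Next I would argue joint continuity of $f_k$: the map $(t,Y,X) \mapsto A(t)$ is continuous by assumption on $A$, matrix multiplication and transposition are continuous, and since the symmetric matrix $Y^T A(t)^T X X^T A(t) Y + \tfrac{1}{k}I$ is strictly positive definite for every $(t,Y,X)$, its inverse depends continuously on its entries. Composing with $\|\cdot\|_F^2$ yields continuity of $f_k$ on $D \times \mathbb{R}^{n\times(r+p)} \times \mathbb{R}^{n\times(r+p+\ell)}$. In particular, $f_k$ is a Carath\'eodory function in the sense of \cite[Definition 4.50]{guide2006infinite}, so since $\mathbb{R}^d$ and $D$ are metrizable and separable, \cite[Lemma 4.51]{guide2006infinite} gives measurability of $f_k$ with respect to the product $\sigma$-algebra $\mathcal{B}(D)\otimes\mathcal{B}(\mathbb{R}^{n\times(r+p)}) \otimes \mathcal{B}(\mathbb{R}^{n\times(r+p+\ell)})$.

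Finally, I would verify pointwise convergence $f_k \to f$. Using the pseudoinverse limit identity above with $M = X^TA(t)Y$ gives
\[
\bigl(Y^T A(t)^T X X^T A(t) Y + \tfrac{1}{k} I\bigr)^{-1} Y^T A(t)^T X \;\xrightarrow{k\to\infty}\; \bigl(X^T A(t) Y\bigr)^\dagger,
\]
so that the bracketed expression in $f_k$ converges to $I - A(t)Y (X^T A(t) Y)^\dagger X^T = I - \mathcal{P}_{A(t)Y,X}$. Continuity of the Frobenius norm then yields $f_k(t,Y,X) \to f(t,Y,X)$ pointwise. An appeal to \cite[Corollary 8.9]{schilling2017measures}, stating that pointwise limits of measurable real-valued functions are measurable, completes the proof.

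The only conceptual obstacle is making sure that the regularization is wrapped around $M$ in the right way so that the limit is truly the oblique projector $\mathcal{P}_{A(t)Y,X}$ rather than some other object; since $M$ may be rank-deficient (in particular when $X^T A(t) Y$ loses column rank on a measure-zero set), one must really rely on the pseudoinverse convergence result of \cite{barata2012moore} rather than on continuity of an inverse, but this is already handled by the construction of $f_k$.
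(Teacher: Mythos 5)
Your proposal is correct and follows essentially the same route as the paper: the identical Tikhonov-regularized surrogate $f_k$, joint continuity giving the Carath\'eodory property via \cite[Lemma 4.51]{guide2006infinite}, pointwise convergence to the oblique projector via the pseudoinverse limit of \cite[Theorem 4.3]{barata2012moore}, and measurability of the pointwise limit. The only difference is that you spell out the details the paper delegates to ``as in the proof of Lemma~\ref{measurable}.''
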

\begin{theorem}
\label{Ny_error_l2}
Suppose that $A\in {C}(D,\mathbb{R}^{m\times n})$. For a target rank $r\geq 2$ and oversampling parameters $p,\ell \geq 2$, choose independent Gaussian random matrices $\Omega\in \mathbb{R}^{n\times(r+p)}$, $\Psi\in \mathbb{R}^{n\times(r+p+\ell)}$.
Then 
\begin{equation*}
    \mathbb{E}\Big[\int_{D}\|(I-\mathcal{P}_{A(t)\Omega,\Psi})A(t)\|^2_F\,\mathrm{d}t\Big]\leq\Big({1+\frac{r+p}{\ell-1}}\Big)\Big(1+\frac{r}{p-1}\Big)\int_{D}\sum_{j>r}\sigma_j(A(t))^2\,\mathrm{d}t.
\end{equation*}
\end{theorem}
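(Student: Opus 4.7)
My plan is to follow the same template as the proof of Theorem \ref{l2-exp}, with Lemma \ref{measurable_Ny} playing the role of Lemma \ref{measurable} and Theorem \ref{Ny_error} playing the role of Theorem \ref{HMT}. The only difference is that the probability space now carries the joint distribution of the pair $(\Omega,\Psi)$ of independent Gaussian DRMs rather than a single DRM.

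First, I would verify that both measure spaces appearing in Lemma \ref{measurable_Ny} are $\sigma$-finite: since $D$ is compact we have $\lambda(D)<\infty$, and $\mu_{\Omega,\Psi}$ is a probability measure and hence finite. Combined with the nonnegativity and measurability of the integrand established in Lemma \ref{measurable_Ny}, this lets me invoke Tonelli's theorem and swap the expectation with the integration over $D$:
\begin{equation*}
\mathbb{E}\Big[\int_{D}\|(I-\mathcal{P}_{A(t)\Omega,\Psi})A(t)\|^2_F\,\mathrm{d}t\Big] = \int_{D}\mathbb{E}\big[\|(I-\mathcal{P}_{A(t)\Omega,\Psi})A(t)\|^2_F\big]\,\mathrm{d}t.
\end{equation*}

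Next, for each fixed $t \in D$, the matrix $A(t)$ is deterministic and all the randomness resides in $\Omega$ and $\Psi$, which are independent Gaussian matrices of precisely the dimensions required in the hypothesis of Theorem \ref{Ny_error}. Applying that theorem pointwise yields
\begin{equation*}
\mathbb{E}\|(I-\mathcal{P}_{A(t)\Omega,\Psi})A(t)\|^2_F \leq \Big(1+\frac{r+p}{\ell-1}\Big)\Big(1+\frac{r}{p-1}\Big)\sum_{j>r}\sigma_j(A(t))^2.
\end{equation*}
Integrating both sides over $D$ then delivers the bound claimed in the theorem.

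I do not foresee any serious obstacle here: the measurability hurdle, which was the only delicate point, has already been cleared by Lemma \ref{measurable_Ny}, and the pointwise estimate from Theorem \ref{Ny_error} is exactly what is needed. The only detail worth mentioning in passing is that the independence of $\Omega$ and $\Psi$ that is hypothesized jointly in Theorem \ref{Ny_error_l2} is automatically preserved when $t$ is fixed, because $t$ lives in the factor $D$ of the product space and is independent of the DRM factor $\mathbb{R}^{n\times(r+p)}\times \mathbb{R}^{n\times(r+p+\ell)}$.
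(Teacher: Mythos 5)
Your proposal is correct and matches the paper's own proof essentially verbatim: both verify $\sigma$-finiteness, apply Tonelli's theorem using the measurability from Lemma~\ref{measurable_Ny}, and then invoke Theorem~\ref{Ny_error} pointwise for each fixed $t$. No gaps.
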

\begin{proof}
By Lemma \ref{measurable_Ny}, $f$ is a non-negative measurable function and thus, as in the proof of Theorem \ref{l2-exp}, Tonelli's theorem and Theorem~\ref{Ny_error} imply that
\begin{align*}
     \mathbb{E}\Big[\int_{D}\|(I-\mathcal{P}_{A(t)\Omega,\Psi})A(t)\|^2_F\,\mathrm{d}t\Big]&=\int_{D}\mathbb{E}[\|(I-\mathcal{P}_{A(t)\Omega,\Psi})A(t)\|^2_F]\,\mathrm{d}t\\&
     \leq \int_{D}\Big({1+\frac{r+p}{\ell-1}}\Big)\Big(1+\frac{r}{p-1}\Big) \sum_{j> r}\sigma_j(A(t))^2\,\mathrm{d}t.
\end{align*}
\end{proof}
Theorem \ref{Ny_error_l2} shows that the expected (squared) $L^2$ approximation error of Algorithm~\ref{Algro: GN_para} stays within a factor $({1+\frac{r+p}{\ell-1}})(1+\frac{r}{p-1})$ of the one obtained when applying the truncated SVD pointwise. Compared to the bound obtained in 
Theorem~\ref{l2-exp} for HMT, only the additional factor $({1+\frac{r+p}{\ell-1}})$ is needed. In the constant case $A(t)\equiv B\in \mathbb{R}^{m\times n}$, the result of Theorem \ref{Ny_error_l2} coincides with Theorem \ref{Ny_error}.

Following the proof of Theorem~\ref{l2-tail}, we obtain a tail bound by establishing bounds on the higher-order moments of the approximation error.
\begin{theorem}
\label{Ny-l2-tail}
In addition to the assumptions of Theorem~\ref{Ny_error_l2}, suppose that $ p\ge 4$ and $ \ell \ge 4$ hold.
Then for all $\gamma\geq 1$, the probability that the inequality
\begin{equation*}
   \Big( \int_{D}\|(I-\mathcal{P}_{A(t)\Omega,\Psi})A(t)\|_F^2\,\mathrm{d}t\Big)^{\frac{1}{2}}< \gamma \cdot\sqrt{(1+r+p)(1+r)}\Big(\int_{D}\sum_{j> r}\sigma_j\big(A(t)\big)^2\,\mathrm{d}t\Big)^{\frac{1}{2}},
\end{equation*} fails is at most $\gamma^{-\min\{p,\ell\}}$.
\end{theorem}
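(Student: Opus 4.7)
The plan is to mimic the proof of Theorem~\ref{l2-tail}: derive a bound on the $q$-th moment of the integrated squared error for $q=\min\{p,\ell\}/2$, and then apply Markov's inequality, which will yield the failure probability $\gamma^{-2q}=\gamma^{-\min\{p,\ell\}}$. By Lemma~\ref{measurable_Ny} and Minkowski's integral inequality, exactly as in the proof of Lemma~\ref{pmoment}, one can pull $\mathbb{E}^q$ inside the integral over $D$, so the task reduces to the pointwise estimate $\mathbb{E}^q\bigl[\|(I-\mathcal{P}_{A(t)\Omega,\Psi})A(t)\|_F^2\bigr]\le(1+r)(1+r+p)\|\Sigma_2(t)\|_F^2$ for every fixed $t\in D$.

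I would obtain this bound by invoking the structural decomposition of Theorem~\ref{Ny_Structural}, which splits the squared error into the HMT contribution $\|(I-QQ^T)A(t)\|_F^2$ (where $Q$ denotes an orthonormal basis of $A(t)\Omega$) and the oblique-projection residual $\|(\Psi^TQ)^\dagger(\Psi^TQ_\perp)(Q_\perp^TA(t))\|_F^2$. Applying Minkowski's inequality in $L^q$ treats the two terms separately. The first term is exactly the HMT error, so Lemma~\ref{pmoment} and the monotonicity of $\mathbb{E}^q$ in $q$ (since $q\le p/2$) immediately give the bound $(1+r)\|\Sigma_2(t)\|_F^2$. For the second term, I would condition on $\Omega$, which freezes $Q,Q_\perp$ and $C:=Q_\perp^TA(t)$. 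Because $Q^TQ_\perp=0$, the conditional distributions of $\Psi^TQ$ and $\Psi^TQ_\perp$ are independent standard Gaussian matrices. Conditioning further on $\Psi^TQ$ and applying Lemma~\ref{L2norm} with the Gaussian matrix $\Psi^TQ_\perp$, then averaging over $\Psi^TQ$, reduces the problem to controlling $\mathbb{E}^q[\|(\Psi^TQ)^\dagger\|_F^2\mid\Omega]$. Lemma~\ref{Lq_psudo} applied to the transpose $Q^T\Psi$ (so that the ``short-fat'' form of the lemma matches, with $r_L=r+p$ and $p_L=\ell$) bounds this by $(r+p)$ times a ratio of gamma functions that is designed to cancel the Gaussian-moment constant from Lemma~\ref{L2norm}. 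Taking the outer expectation over $\Omega$ and invoking Lemma~\ref{pmoment} once more on $\|C\|_F^2$ yields the contribution $(r+p)(1+r)\|\Sigma_2(t)\|_F^2$, so the two terms together produce the claimed factor $(1+r)(1+r+p)$.

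The main obstacle is matching exponents in the conditioning argument. Lemma~\ref{Lq_psudo} is stated for the single value $q=p/2$, whereas here $q=\min\{p,\ell\}/2$ is used against a Gaussian matrix with oversampling parameter $\ell$, so $q$ and $\ell/2$ need not coincide. I would therefore use the natural extension of Lemma~\ref{Lq_psudo} to any $q<(\ell+1)/2$, which follows verbatim from its proof via the identity $\mathbb{E}[X^{-q}]=\Gamma((\ell+1)/2-q)/(2^q\Gamma((\ell+1)/2))$ for $X\sim\chi^2_{\ell+1}$, and then verify that the combined gamma factor $\Gamma(1/2+q)\Gamma((\ell+1)/2-q)/[\Gamma(1/2)\Gamma((\ell+1)/2)]$ is bounded by one on $[0,\ell/2]$. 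The latter holds because $q\mapsto \log\Gamma(1/2+q)+\log\Gamma((\ell+1)/2-q)$ is convex on that interval and takes equal values at the endpoints $q=0$ and $q=\ell/2$, so the extra gamma factor does not degrade the constant $(r+p)$ obtained in the clean case $p=\ell$.
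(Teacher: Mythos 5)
Your proposal is correct and follows essentially the same route as the paper: Minkowski's integral inequality to reduce to a pointwise moment bound, the structural split of Theorem~\ref{Ny_Structural}, conditioning on $\Omega$ and then on $\Psi^TQ$, and Lemmas~\ref{L2norm}, \ref{Lq_psudo} and~\ref{pmoment} combined via Markov's inequality. The only (equally valid) deviation is how you reconcile the exponent $q=\min\{p,\ell\}/2$ with the $\ell$-dependent pseudoinverse moment: the paper upgrades $q$ to $\ell/2$ via Jensen's inequality and uses monotonicity of the moments of a $\chi^2_1$ variable so that the two gamma factors cancel exactly, whereas you extend Lemma~\ref{Lq_psudo} to general $q<(\ell+1)/2$ and bound the combined gamma ratio by one via log-convexity with equal endpoint values; both arguments yield the same constant $(1+r+p)(1+r)$.
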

\begin{proof}
Letting $q=\min\{p/2,\ell/2\}$, Minkowski's integral inequality~\cite[p.194]{folland1999real} implies that
 \begin{equation*}
    \mathbb{E}^q\Big(\int_{D}\|(I-\mathcal{P}_{A(t)\Omega,\Psi})A(t)\|_F^2\,\mathrm{d}t\Big)\leq \int_{D}\mathbb{E}^q(\|(I-\mathcal{P}_{A(t)\Omega,\Psi})A(t)\|_F^2)\,\mathrm{d}t.
\end{equation*}
We proceed by bounding the integrand on the right-hand side for \emph{fixed} $t$. Note that we may assume $\rank(A(t))>r+p$ without loss of generality, because otherwise generalized Nystr\"{o}m recovers the exact matrix almost surely and the error as well as all its moments are zero.
This implies that $\Psi^TA(t)\Omega$ has full column rank almost surely.
Considering the QR factorization $A(t)\Omega = Q R$, we denote $\Psi_1 =\Psi^TQ \in \mathbb{R}^{(r+p+\ell)\times (r+p)}$ and $\Psi_2 =\Psi^TQ_\perp\in \mathbb{R}^{(r+p+\ell)\times (n-r-p)}$. Note that we have omitted the dependence on the (fixed) parameter $t$ to simplify the notation.
By Theorem \ref{Ny_Structural},
\begin{equation}\label{eq:bound_ny}
     \mathbb{E}^q(\|(I-\mathcal{P}_{A(t)\Omega,\Psi})A(t)\|_F^2) \leq \mathbb{E}^q\big(\|(I-QQ^T)A(t)\|^2_F\big) +\mathbb{E}^q\big(\|\Psi_1^\dagger \Psi_2 Q^T_\perp A(t)\|^2_F\big).
\end{equation}
We first bound the second term in~\eqref{eq:bound_ny}. Conditioned on $\Omega$, $\Psi_1$ and $\Psi_2$ are independent Gaussian random matrices. Thus, using the law of total expectation and Lemma \ref{L2norm}, we obtain 
\begin{align}
   \mathbb{E}^q\big(\|\Psi_1^\dagger \Psi_2 Q^T_\perp A(t) \|^2_F\big)
   &=\Big( \mathbb{E}_\Omega\big[\mathbb{E}_{\Psi_1}\mathbb{E}_{\Psi_2}\big(\|\Psi_1^\dagger \Psi_2 Q^T_\perp A(t)\|^{2q}_F\big)\big]\Big)^\frac{1}{q} \nonumber \\
   &\leq \left(\frac{2^q\Gamma(\frac{1}{2}+q)}{\Gamma(\frac{1}{2})}\right)^\frac{1}{q}\Big(\mathbb{E}_\Omega\big[\mathbb{E}_{\Psi_1}\big(\|\Psi_1^\dagger\|_F^{2q}\| Q^T_\perp A(t)\|^{2q}_F\big)\big]\Big)^\frac{1}{q} \nonumber \\
   &= \left(\frac{2^q\Gamma(\frac{1}{2}+q)}{\Gamma(\frac{1}{2})}\right)^\frac{1}{q}\Big(\mathbb{E}_\Omega\big[\| Q^T_\perp A(t)\|^{2q}_F\mathbb{E}_{\Psi_1}\big(\|\Psi_1^\dagger\|_F^{2q}\big)\big]\Big)^\frac{1}{q}. \label{eq:inequality11}
\end{align}
Jensen’s inequality (using $q\leq \ell/2$) and Lemma \ref{Lq_psudo} give
\begin{align*}
    \mathbb{E}_{\Psi_1}\big(\|\Psi_1^\dagger\|_F^{2q}\big)&=\big[\mathbb{E}^q_{\Psi_1}\big(\|\Psi_1^\dagger\|_F^{2})\big]^q \leq\big[\mathbb{E}^{\frac{\ell}{2}}_{\Psi_1}\big(\|\Psi_1^\dagger\|_F^{2}\big)\big]^q =\big[\mathbb{E}^{\frac{\ell}{2}}_{\Psi_1}\big(\|(\Psi_1^T)^\dagger\|_F^{2}\big)\big]^q\\&\leq (r+p)^q\left(\frac{\Gamma(\frac{1}{2})}{2^{\ell/2}\Gamma(\ell/2+\frac{1}{2})}\right)^{\frac{2q}{\ell}}.
\end{align*} Inserting this inequality into~\eqref{eq:inequality11} yields
\begin{align*}
   \mathbb{E}^q\big(\|\Psi_1^\dagger \Psi_2 Q^T_\perp A(t)\|^2_F\big)&\leq \left(\frac{2^q\Gamma(\frac{1}{2}+q)}{\Gamma(\frac{1}{2})}\right)^\frac{1}{q}\left(\frac{\Gamma(\frac{1}{2})}{2^{\frac{\ell}{2}}\Gamma(\frac{\ell+1}{2})}\right)^{\frac{2}{\ell}}(r+p)\Big(\mathbb{E}_\Omega\big[\| Q^T_\perp A(t)\|^{2q}_F\big]\Big)^\frac{1}{q}
    \\&\leq \left(\frac{2^\frac{\ell}{2}\Gamma(\frac{1}{2}+\frac{\ell}{2})}{\Gamma(\frac{1}{2})}\right)^\frac{2}{\ell}\left(\frac{\Gamma(\frac{1}{2})}{2^{\frac{\ell}{2}}\Gamma(\frac{\ell+1}{2})}\right)^\frac{2}{\ell}(r+p)\Big(\mathbb{E}_\Omega\big[\| Q^T_\perp A(t)\|^{2q}_F\big]\Big)^\frac{1}{q}\\
    &\leq (r+p)\Big(\mathbb{E}_\Omega\big[\| Q^T_\perp A(t)\|^{2q}_F\big]\Big)^\frac{1}{q},
\end{align*}
 where the second inequality follows from \begin{equation*}
     {\left(\frac{2^q\Gamma(\frac{1}{2}+q)}{\Gamma(\frac{1}{2})}\right)^{\frac{1}{q}}}=\mathbb{E}^q(X)\leq \mathbb{E}^\frac{\ell}{2}(X)= {\left(\frac{2^\frac{\ell}{2}\Gamma(\frac{1}{2}+\frac{\ell}{2})}{\Gamma(\frac{1}{2})}\right)^{\frac{2}{\ell}}},
 \end{equation*} letting $X$ denote a $\chi^2$ random variable with 1 degree of freedom.
Combining with \eqref{eq:bound_ny} and using, once more, Jensen’s inequality shows
\begin{align*}
    \mathbb{E}^q(\|(I-\mathcal{P}_{A(t)\Omega,\Psi})A(t)\|_F^2)&\leq (1+r+p) \mathbb{E}^q\big(\|(I-QQ^T)A(t)\|^2_F\big)\\
     &\leq (1+r+p) \mathbb{E}^{\frac{p}{2}}\big(\|(I-\mathcal{P}_{A(t)\Omega})A(t)\|^2_F\big).
\end{align*}
In summary, we obtain \begin{align*}
     \mathbb{E}^q\Big(\int_{D}\|(I-\mathcal{P}_{A(t)\Omega,\Psi})A(t)\|_F^2\,\mathrm{d}t\Big)&\leq \int_{D}\mathbb{E}^q(\|(I-\mathcal{P}_{A(t)\Omega,\Psi})A(t)\|_F^2)\,\mathrm{d}t\\&\leq (1+r+p) \int_{D}\mathbb{E}^{\frac{p}{2}}\big(\|(I-\mathcal{P}_{A(t)\Omega})A(t)\|^2_F\big)\,\mathrm{d}t\\&\leq(1+r+p)(1+r)\int_{D}\sum_{j> r}\sigma_j\big(A(t)\big)^2\,\mathrm{d}t,
\end{align*} where the last inequality follows from~\eqref{eq:lp-inequaility}. The proof is concluded by applying the Markov inequality as in the proof of Theorem~\ref{l2-tail}.
\end{proof}

Comparing the results of Theorems~\ref{l2-tail} and~\ref{Ny-l2-tail}, we see that there is an extra factor $\sqrt{1+r+p}$ in the quasi-optimality factor of Theorem~\ref{Ny-l2-tail} and the failure probability also depends on the oversampling parameter $\ell$.

\section{Numerical experiments}
\label{sect: numerical}
In this section, we verify the performance of randomized low-rank approximation methods for parameter-dependent matrices numerically.
If $\hat A(t)$ denotes the approximation to $A(t)$ returned by a method, we estimate the $L^2$ error on an interval $[\alpha,\beta]$,
\begin{equation} \label{eq:l2error}
 \Big( \int_\alpha^\beta \|\hat A(t) - A(t)\|_F^2\,\mathrm{d}t \Big)^{1/2}.
\end{equation}
Unless mentioned otherwise, we approximate the integral with the composite trapezoidal rule using 300 uniform quadrature points.
Since all methods involve random quantities, we report the mean $L^2$ approximation error as well as the spread between the largest and smallest errors for 20 independent random trials (indicated by lower/upper horizontal lines in the graphs).

All experiments have been performed in Matlab (version 2023a) on a Macbook Pro with an Apple M1 Pro processor. The code used to produce the figures can be found at \url{https://github.com/hysanlam/Parameter\_RLR}. In the following, parameter-dependent HMT refers to our Matlab implementation of the approximation~\eqref{eq:parahmt}, using the QR decomposition of $A(t) \Omega$ for realizing the orthogonal projection $\mathcal P_{A(t)\Omega}$.
Parameter-dependent Nystr\"om refers to our Matlab implementation of Algorithm~\ref{Algro: GN_para}, using 
$\ell=0.2(r+p)$ and $\epsilon=2.22\times 10^{-15}$; see also~\cite{nakatsukasa2020fast}. 
\subsection{Synthetic example}
We first consider a synthetic example from~\cite[section 6.1]{ceruti2022unconventional}:
\begin{equation} \label{eq:syntheticexample}
    A(t)=e^{tW_1} e^tD e^{tW_2},\quad t\in [0,1],
\end{equation}
where $D\in\mathbb{R}^{n\times n}$ is diagonal with entries $d_{jj}=2^{-j}$ and $W_1\in\mathbb{R}^{n\times n}$, $W_2\in\mathbb{R}^{n\times n}$ are randomly generated, skew-symmetric matrices. {We compute the matrix exponential using Matlab's expm.} The singular values of $A(t)$ are $e^t2^{-j}$, $j = 1,\ldots,n$. Figure~\ref{fig:Synthetic} displays the results obtained from applying our methods to $A(t)$ for $n=100$. It turns out that the accuracy of our methods is within $1$--$2$ orders of magnitude relative to the optimal method, truncating the SVD to rank $r+p$ for each $A(t)$. 
 \begin{figure}[ht]
\includegraphics[width=\textwidth]{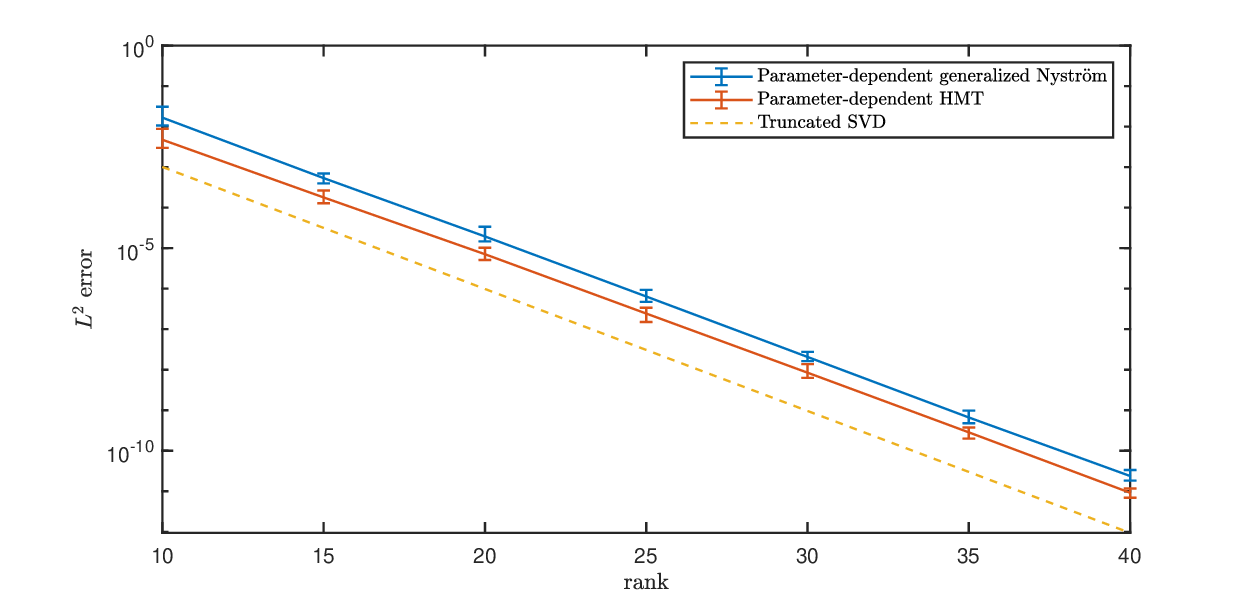}
\centering
\caption{Synthetic example~\eqref{eq:syntheticexample} with $n = 100$. $L^2$ approximation error of parameter-dependent HMT / Nystr\"om compared to the pointwise truncated SVD for different approximation ranks.}
\label{fig:Synthetic}
\end{figure}

Figure~\ref{fig:Synthetic_constant}
also compares the error of our methods with the one obtained from applying HMT / Nystr\"om with different, independent DRMs for each parameter value $t$ used in the approximation of the integral in~\eqref{eq:l2error}. {We have additionally plotted boxes; which the lower and upper end of each box denotes the 25th and 75th percentiles, respectively.} Our methods exhibit a somewhat larger variance in the error but still give similar accuracy. Similar observations have been made for all other examples considered in this section; using constant instead of independent Gaussian random DRMs only has a minor impact on accuracy. We will therefore not report them in detail for the other examples. 
   
 \begin{figure}[H]
\includegraphics[width=\textwidth]{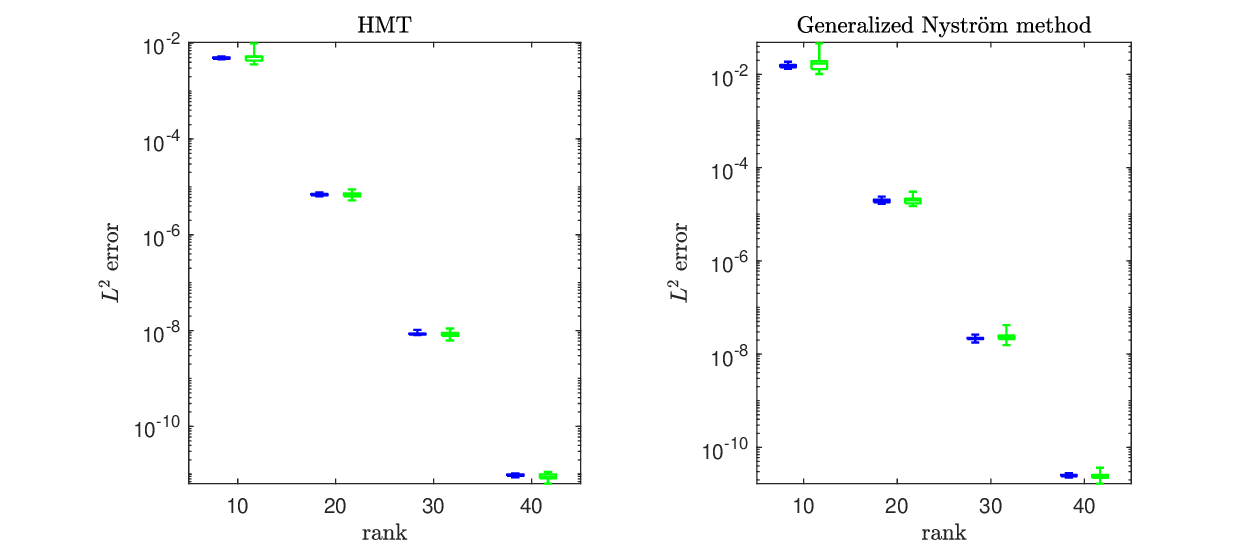}
  \caption{Synthetic example~\eqref{eq:syntheticexample} with $n = 100$. $L^2$ approximation error of parameter-dependent HMT / Nystr\"om (green) compared to pointwise HMT / Nystr\"om with different, independent DRMs (blue) for approximation ranks $10$, $20$, $30$, and $40$.}
\label{fig:Synthetic_constant}
\end{figure}

\subsection{Parametric cookie problem}

We perform low-rank approximation to the solution of a time- and parameter-dependent discretized PDE from~\cite{carrel2022low,kressner2011low}, sometimes called the parametric cookie problem. In matrix form, the problem was given by\begin{equation} \label{eq:paracookie}
    \dot{A}(t) =-B_0A(t) -B_1A(t) C+b\mathbf{1}^T,\;\;\;A(t_0)=A_0,
\end{equation} where $B_0,B_1\in\mathbb{R}^{1580\times1580}$ are the mass and stiffness matrices, respectively, $b\in\mathbb{R}^{1580}$ is the discretized  inhomogeneity, and $C=\diag(0,1,2,\ldots,100)$ contains the parameter samples on the diagonal; see~\cite{kressner2011low}. {We set $t_0=-0.01$ and $A_0$ as the zero matrix. After computing the exact solution of \eqref{eq:paracookie} at $t=0$, we estimate the low-rank approximation methods error for $A(t)$ on $[0,0.9]$ numerically. We first discretize the interval $[0,0.9]$ with 300 uniform points, then we obtain an evaluation of $A$ on these points by using Matlab's ode45 with tolerance parameters $\{\text{'RelTol'}, 1e-12, \text{'AbsTol'}, 1e-12\}$ on each subinterval. Finally, we apply our methods and approximate \eqref{eq:l2error} by trapezoidal rule on every subinterval.} Figure~\ref{fig:cookie_sig} confirms that $A(t)$ has rapidly decaying singular values for different $t$ and thus admits good low-rank approximations. Figure~\ref{fig:cookie_error} displays the resulting errors. Again, our methods perform only insignificantly worse than the truncated SVD.
 \begin{figure}[H]
\includegraphics[width=\textwidth]{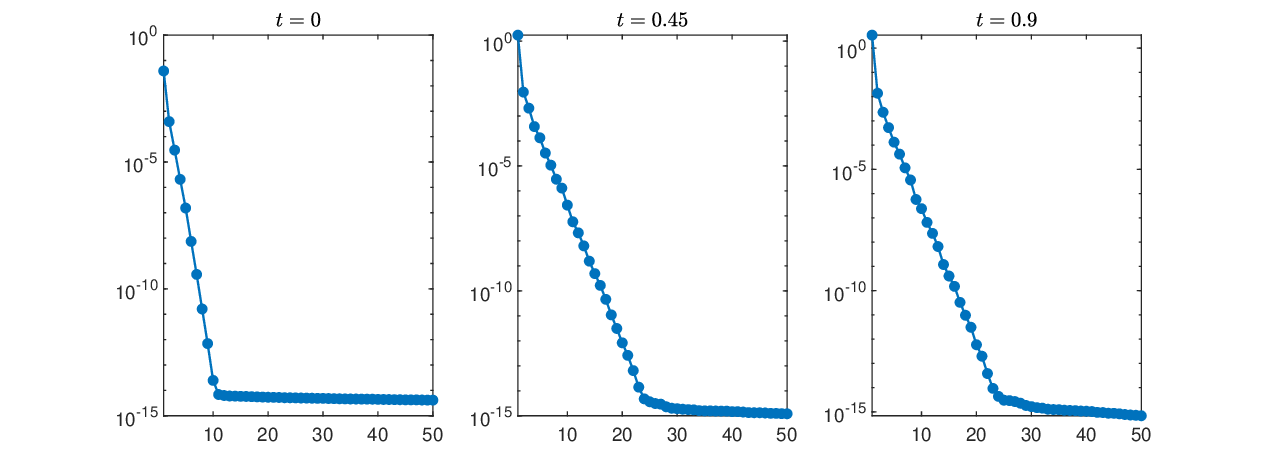}
\centering
\caption{Singular values of the reference solution to~\eqref{eq:paracookie} at $t=0,0.45$ and $0.9$.}
\label{fig:cookie_sig}
\end{figure}

 \begin{figure}[H]
{\includegraphics[width=\textwidth]{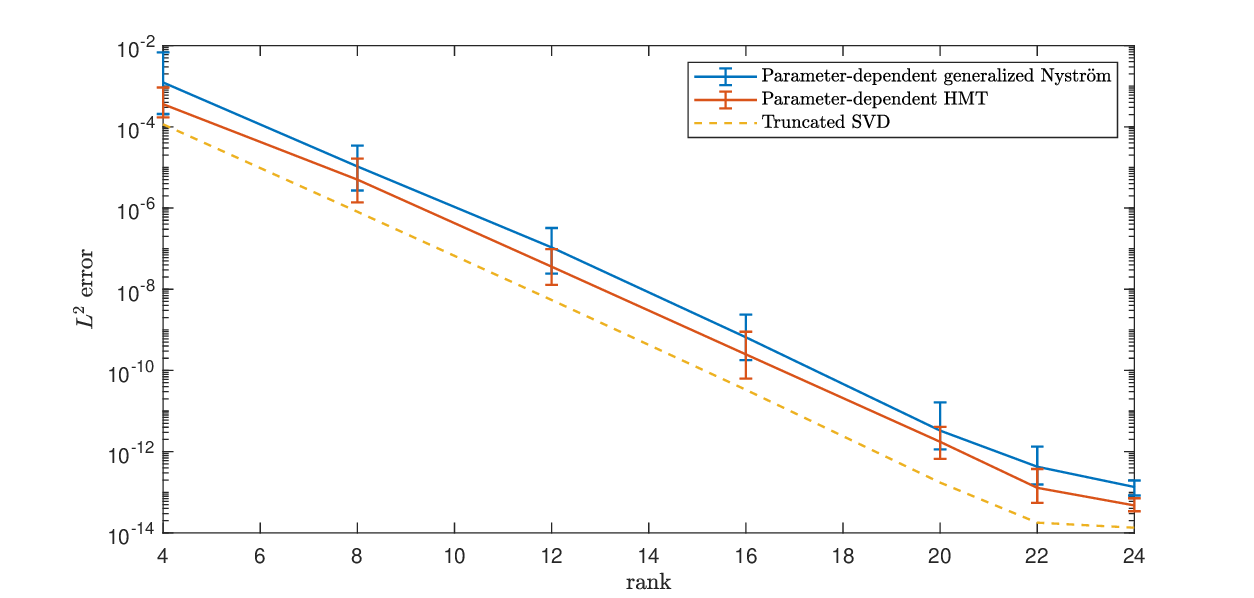}}
\centering
\caption{Parametric cookie example~\eqref{eq:paracookie}. $L^2$ approximation error of parameter-dependent HMT / Nystr\"om compared to the pointwise truncated SVD for different approximation ranks.}
\label{fig:cookie_error}
\end{figure}
%

\subsection{Discrete Schr\"{o}dinger equation in imaginary time}

In this example, we perform low-rank approximation to the solution of the discrete Schr\"{o}dinger equation in
imaginary time from~\cite{ceruti2022unconventional}:
\begin{equation}\label{eq:discschroedinger}
    \dot{A}(t) =-H[A(t)],\quad A(0)=A_0,
\end{equation} where
\begin{equation*}
    H[A(t)]=-\frac{1}{2}(DA(t)+A(t) D)+V_{\cos} A(t) V_{\cos}\in \mathbb{R}^{n\times n},
\end{equation*}
$D=\text{tridiag}(-1,2,-1)$ is the discrete 1D Laplace, and $V_{\cos}$ is the diagonal matrix with diagonal entries
$1-\cos(2j\pi/n)$ for $j=-n/2,\ldots,n/2-1$.
We choose $n=2048$ and an initial value $A_0$ that is randomly generated with prescribed singular values $10^{-i}$, $i=1,\ldots,2048$.
Similar to the previous example, to estimate the low-rank approximation error to $A(t)$ on $[0,0.1]$ numerically, we first discretize the interval with 300 uniform points. Then, obtain the evaluation of $A(t)$ on these points by using Matlab's ode45 with tolerance parameters $\{\text{'RelTol'}, 1e-12, \text{'AbsTol'}, 1e-12\}$ on each subinterval and approximate \eqref{eq:l2error} by trapezoidal rule on every subinterval. Comparing Figures~\ref{fig:cookie_sig} and~\ref{fig:schrodinger_sig}, we see that the singular value decay is now somewhat less favorable. In turn, a higher rank is needed to attain the same approximation error. Figure~\ref{fig:schrodinger_error} once again confirms the good performance of our methods relative to the truncated SVD. 
\begin{figure}[ht]
\includegraphics[width=\textwidth]{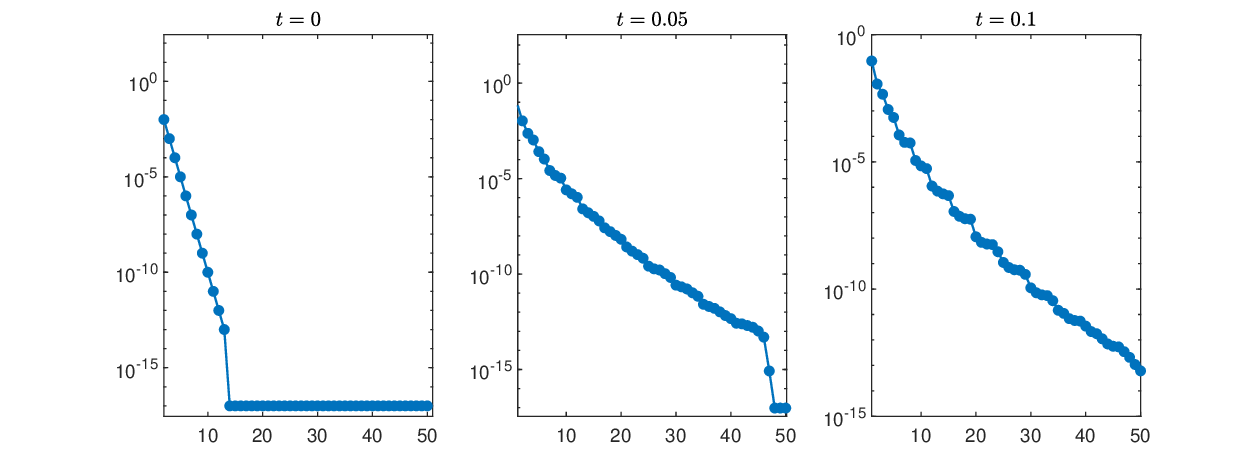}
\centering
\caption{The singular values of the reference solution to the discrete Schr\"{o}dinger equation at $t=0,0.05$ and $0.1$.}
\label{fig:schrodinger_sig}
\end{figure}%
\begin{figure}[ht]
\includegraphics[width=\textwidth]{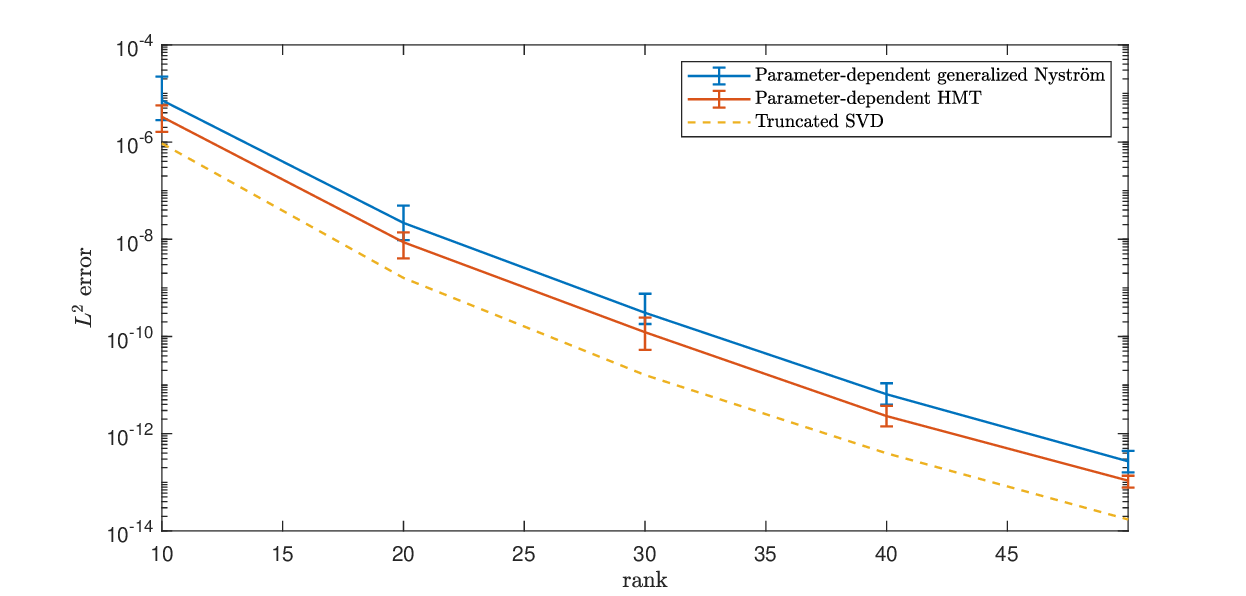}
\centering
\caption{Discrete Schr\"{o}dinger example~\eqref{eq:discschroedinger}. $L^2$ approximation error of parameter-dependent HMT / Nystr\"om compared to the pointwise truncated SVD for different approximation ranks.}
\label{fig:schrodinger_error}
\end{figure}


\subsection{Parameterized Gaussian covariance kernel}
\subsubsection*{Test 1. Data from regular grid}
We first consider an example from~\cite[section 5.1]{kressner2020certified} concerned with the low-rank approximation of Gaussian covariance matrices.
Consider the Gaussian covariance kernel on the domain $\mathbb D = [0,1]\times [0,1]$ given by
\begin{equation*}
    c(x,y;t)=\exp\left(-\frac{\|x-y\|_2^2}{2t^2}\right),\quad \forall x,y \in \mathbb D,
\end{equation*}
where $t\in \Theta:=[0.1,\sqrt{2}]$ is a parameterized correlation length. We proceed as in~\cite[section 5.1]{kressner2020certified}, we discretize the domain $\mathbb D$ by a regular grid with $n=4900$ grid points and denote each grid point as $\mathbf{x}_i\in \R^2$ for $i=1,\ldots,n$. This leading to the $n\times n$ covariance matrix $C(t)$ with entries
\begin{equation*}
    C(t)_{i,j}:=\frac{1}{n}c(\mathbf{x}_i,\mathbf{x}_j,t),\quad i,j=1,\ldots,n.
\end{equation*}
We now approximate the kernel by an expansion that separates the spatial variables from the parameter $t$:
\begin{equation*}
    c(x,y;t)\approx \sum^s_{j=1}\phi_j(t)a_j(\|x-y\|_2),
\end{equation*} where $\phi_j:[0.1,\sqrt{2}]\rightarrow \mathbb{R}$ and $a_j:\mathbb{R}\rightarrow \mathbb{R}$.
As described in~\cite{kressner2020certified}, we use a polynomial expansion computed by the \emph{Chebfun 2} command \textbf{cdr}, see \cite{Townsend2014computing}.
In figure \ref{fig:l_infty}, we show the error associated with the
approximate covariance kernel for different values of $s$.
 \begin{figure}[h]
\includegraphics[width=\textwidth]{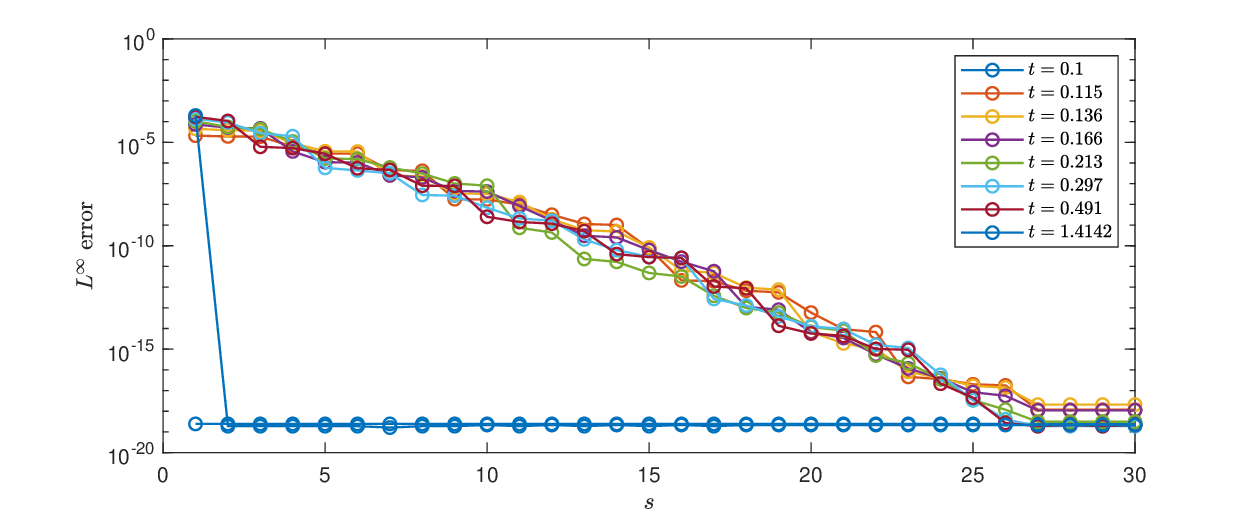}
\centering
\caption{Error associated with the
approximate covariance kernel $\sum^s_{j=1}\phi_j(t)a_j(\|x-y\|_2)$ vs. expansion length $s$. Each curve represents the $L^\infty$ norm of $c(x,y;t)-\sum^s_{j=1}\phi_j(t)a_j(\|x-y\|_2)$ on $\mathbb D$ for fixed $t$.}
\label{fig:l_infty}
\end{figure}
In the following, we choose $s=18$, leading to a negligible error. By defining $A_k$ to be the matrix associated with $a_k(\|\mathbf{x}_i-\mathbf{x}_j\|_2)$, we obtain the affine linear decomposition
\begin{equation} \label{eq:Atheta}
    C(t)\approx A(t):=\frac{1}{n}\sum^s_{k=1}\phi_k(t)A_k,
\end{equation}
which allows us to use Algorithms~\ref{offline phase 2} and~\ref{online phase 2} for HMT and Algorithms~\ref{GN-offline phase 2} and~\ref{GN-online phase 2}
for generalized Nystr\"{o}m for approximating $C(t)$ via $A(t)$. 
 {The parameter-dependent ACA (adaptive cross approximation) from~\cite{kressner2020certified} is also applicable in this example. The parameter-dependent ACA does not require the whole matrix to be computed, and to obtain the low-rank factors, it has a cost that scales linearly with respect to $n$, see~\cite[section 3.5]{kressner2020certified}. The parameter-dependent ACA returns a subset of indices $I$ by controlling the diagonal part of the ``updated" $A(t)$ on a finite surrogate set $\Theta_A\subset \Theta$. With the indices $I$, the error of the low-rank approximation to $C(t)$ by $C(t)(:I)[C(t)(I,I)]^{-1}C(t)(:I)^T$ are regulated for all $t\in \Theta_A$.} See \cite[Section 3]{kressner2020certified} for details. In this example, we choose $\Theta_A$ to be 300 uniform points in $\Theta$.
 
{We first compare the computational time of our methods and parameter-dependent ACA for different approximation ranks. We count the computational time needed for computing the low-rank approximation of $C(t)$ at every $t\in \Theta_A$ in factored form. In this comparison, we consider the setting that $A(t)$ is explicitly available for all the methods. To obtain a more accurate computational time, we repeat the whole computation 10 times (with the same DRMs realization) and report the average time in Table \ref{table:aca_time}. We observe that when the rank is not high, ACA performs significantly faster than both of our methods. However, when the rank increases, the computational time of the ACA method increases more significantly. This behaviour can be explained by the complexity bound, by \cite{kressner2020certified}, the parameter-dependent ACA costs $O(nr^2s^2+|\Theta _A|(r^4s+r^3s^2))$ to obtain the indices set $I$.}
\begin{table}[h]
\centering
\caption{Parameterized covariance matrix~\eqref{eq:Atheta} with $\mathbf{x_j} \in \mathbb{D}$ and $t\in \Theta= [0.1,\sqrt{2}]$. Computational time (in seconds).}
\begin{tabular}{||c | c c c ||} 
 \hline
 \multicolumn{4}{||c||}{Average computational time in seconds} \\
  \hline
\scriptsize{rank}&  \scriptsize{parameter-dependent HMT} & \scriptsize{parameter-dependent generalized Nystr\"{o}m} & \scriptsize{parameter-dependent ACA} \\[0.5ex] 
 \hline\hline
  10 & 5.3258 & 3.1637&0.8305 \\ 
 \hline
 20 & 7.8558 &3.7768 &1.7703\\
 \hline
 30 & 10.4207 &4.3503 & 3.5445\\
 \hline
  40 & 13.1245 & 4.8606 & 6.3856 \\
 \hline
50 & 16.3159 & 5.7355& 10.6710 \\
 \hline
 60 & 20.6456 & 6.3244& 17.2224 \\
 \hline
\end{tabular}
\label{table:aca_time}
\end{table}

To compare the accuracy, we estimate the $L^2$ error with respect to approximate $C(t)$ by composite trapezoidal rule on $\Theta_A$ and report it in Figure~\ref{fig: A theta error}.
The error of all methods is within $1$--$2$ orders of magnitude of the best approximation error. HMT and ACA perform comparably well, while the error returned by generalized Nystr\"{o}m is up to a factor 10 higher.

 \begin{figure}[H]
\includegraphics[width=\textwidth]{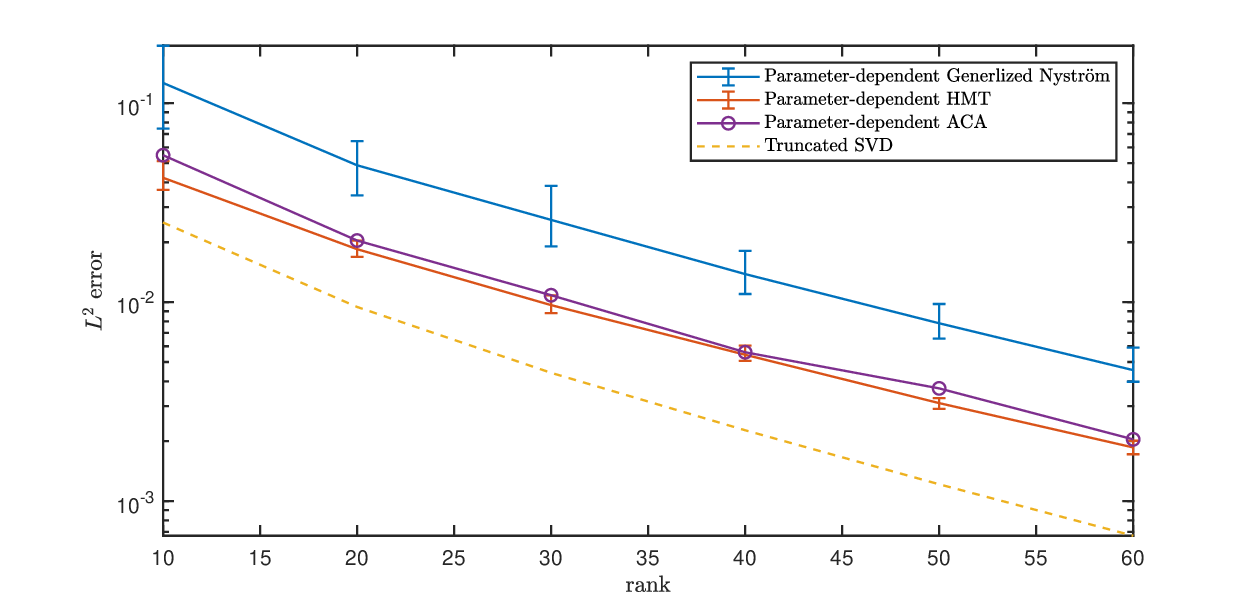}
\centering
\caption{Parameterized covariance matrix~\eqref{eq:Atheta} with $\mathbf{x_j} \in \mathbb{D}$ and $t\in \Theta= [0.1,\sqrt{2}]$. $L^2$ approximation error of parameter-dependent HMT / Nystr\"om / ACA compared to the pointwise truncated SVD for different approximation ranks.}
\label{fig: A theta error}
\end{figure}
\subsubsection*{Test 2. High-dimensional data}
To test the robustness of the method for high-dimensional data, motivated by \cite[Test 4]{Cai2023Data}, we follow the same setting as in Test 1 except this time we construct $C(t)$ and $A(t)$ with $\mathbf{x_j} \in \mathbb{R}^{128}$ for $j=1,\ldots, 4900$ and $t\in \Theta= [10,120]$. The data $\mathbf{x_j} $ were randomly chosen with replacement from the Gas Sensor Array Drift dataset obtained from the UCI Machine Learning repository \cite{asuncion2007uci}, which was standardized to have a mean of zero and a variance of one along each dimension. We report the error in Figure \ref{fig: A theta error_test2}. We observe that our method still gives accurate results while the parameter-dependent ACA deviates from the optimal approximation error the most. The non-robustness of ACA to high-dimensional data has also been observed in \cite[Test 4]{Cai2023Data}.
 \begin{figure}[h]
\includegraphics[width=\textwidth]{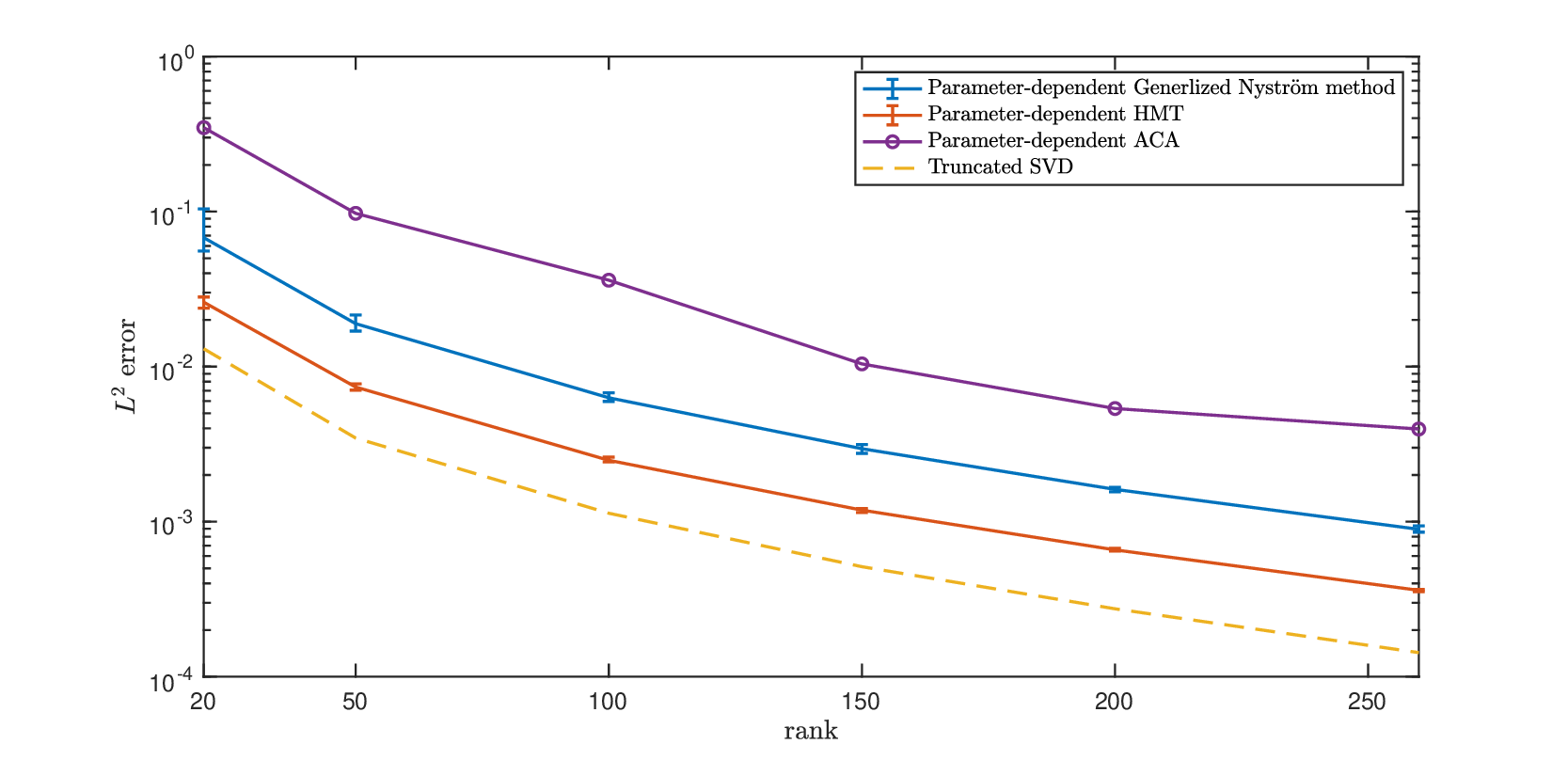}
\centering
\caption{Parameterized covariance matrix~\eqref{eq:Atheta} with $\mathbf{x_j} \in \mathbb{R}^{128}$ randomly sampled from Gas Sensor Array Drift dataset  and $t\in \Theta= [10,120]$. $L^2$ approximation error of parameter-dependent HMT / Nystr\"om / ACA compared to the pointwise truncated SVD for different approximation ranks.}
\label{fig: A theta error_test2}
\end{figure}
\subsubsection*{Test 3. Data with cluster structure}
The randomized nature of our proposed method makes it less sensitive to the geometric structure of the data compared to the parameter-dependent ACA, we illustrate it with the following example, which closely follows \cite[Test 5]{Cai2023Data}. We first take the same setting as in Test 1, but we randomly generate 900 vectors $\mathbf{x_j} \in \mathbb{R}^{2}$, split into three clusters (from left to right) with 100, 700, and 100 points respectively. See Figure \ref{fig: A theta error_test3_points}. Also, we set $t\in \Theta=[0.2,0.4]$.  The cluster structure of the data is not favorable for ACA; it may fail to enhance the accuracy of the approximation regardless of the increase in rank, see \cite[Test 5]{Cai2023Data} for more details.  We report the error in Figure \ref{fig: A theta error_test3}. We observe that the parameter-dependent ACA stagnate from $r=20$ to $r=28$. Similar stagnate behaviour is also observed in \cite[Test 5]{Cai2023Data} for the case of a fixed $\theta$. On the other hand, the parameter-dependent HMT and Nystr\"om still perform well and achieve sub-optimal results.
 \begin{figure}[H]
\includegraphics[width=\textwidth]{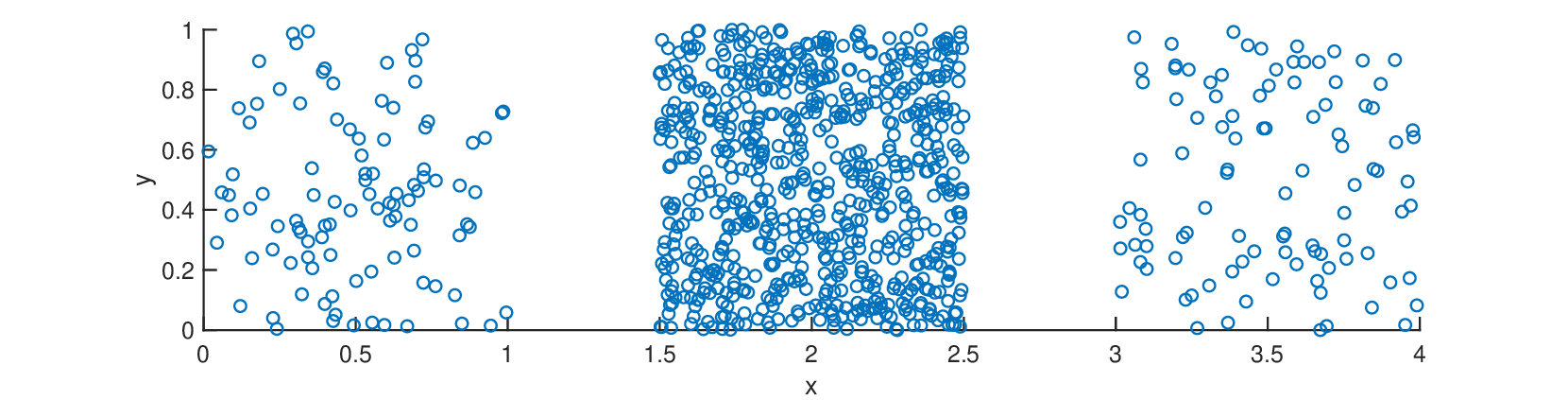}
\centering
\caption{Data $\mathbf{x} \in \mathbb{R}^{2}$, split into three clusters (left to right) with 100, 700, and 100 points respectively.}
\label{fig: A theta error_test3_points}
\end{figure}
 \begin{figure}[H]
\includegraphics[width=\textwidth]{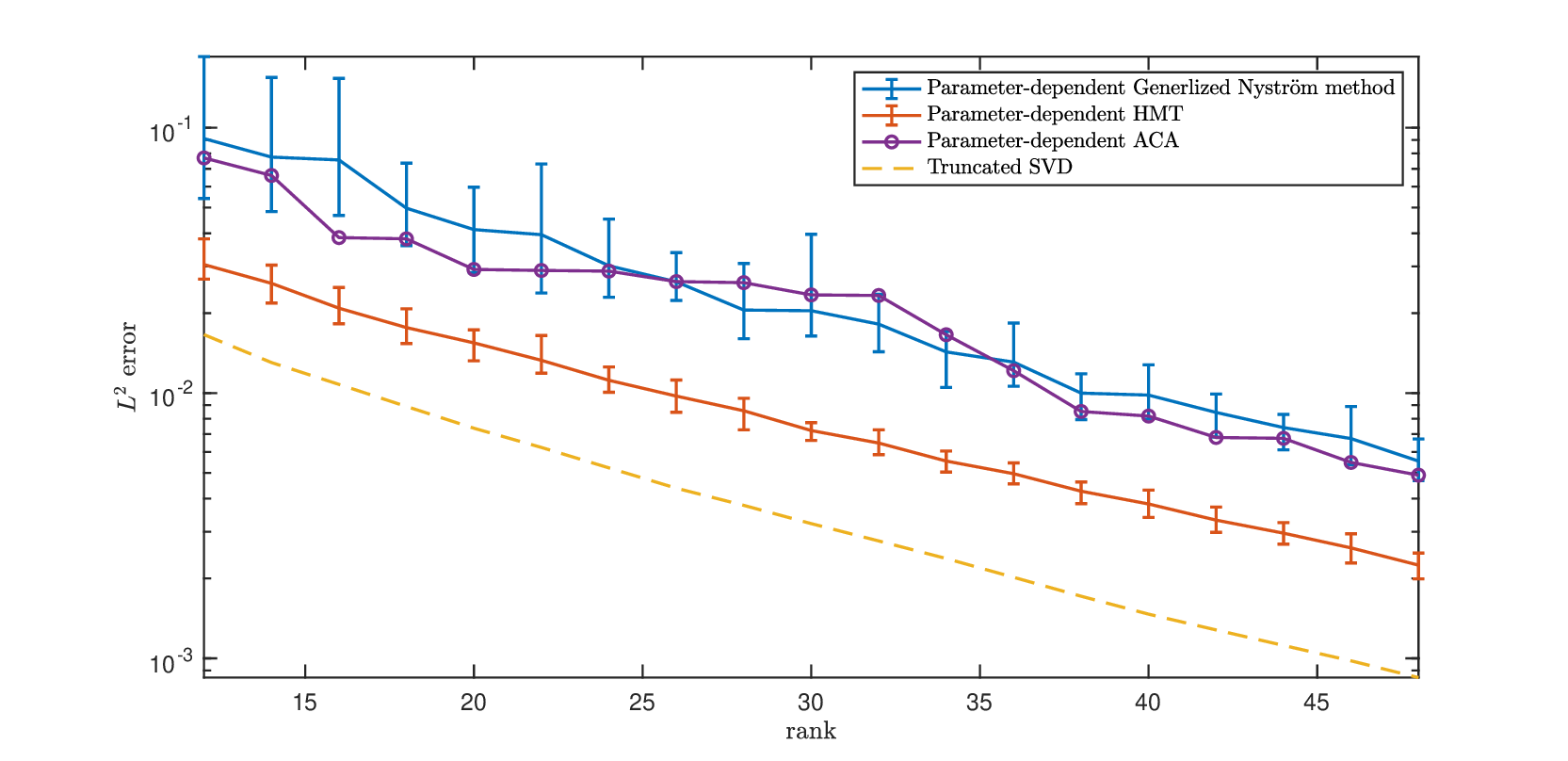}
\centering
\caption{Parameterized covariance matrix~\eqref{eq:Atheta} with clustered data $\mathbf{x_j} \in \mathbb{R}^{2}$ and  $t\in \Theta=[0.2,0.4]$. $L^2$ approximation error of parameter-dependent HMT / Nystr\"om / ACA compared to the pointwise truncated SVD for different approximation ranks.}
\label{fig: A theta error_test3}
\end{figure}

\section{Conclusion}

In this work, we extended two popular randomized low-rank approximation methods to a parameter-dependent matrix. The use of constant DRMs in our methods avoids the cost of generating DRMs and also reduces the computational cost of constructing the sketches. The theoretical and numerical results of this work show that our methods can be expected to achieve comparable accuracy compared to the truncated SVD and using independent DRMs for each parameter value.

\begin{paragraph}{Acknowledgments.}
 The authors sincerely thank Gianluca Ceruti for helpful discussions related to numerical experiments presented in this work and Stefano Massei for providing the implementation of parameter-dependent adaptive cross approximation. We also thank the referees for their constructive remarks.
\end{paragraph}

\bibliographystyle{siam}
\bibliography{main}
\end{document}